\documentclass{amsart}

\usepackage{amssymb}
\usepackage{amsmath}
\usepackage{amsthm}
\usepackage{amsfonts, dsfont}
\usepackage[english]{babel}
\usepackage[utf8]{inputenc}
\usepackage{enumerate}
\usepackage{color}
\usepackage[numbers,sort&compress]{natbib}

\newcommand{\cA}{{\mathcal A}}
\newcommand{\cB}{{\mathcal B}}
\newcommand{\cC}{{\mathcal C}}

\newcommand{\cL}{{\mathcal L}}

\newcommand{\cP}{{\mathcal P}}

\newcommand{\cS}{{\mathcal S}}

\newcommand{\C}{{\mathbb C}}

\newcommand{\E}{{\mathbb E}}

\newcommand{\N}{{\mathbb N}}

\renewcommand{\P}{{\mathbb P}}

\newcommand{\R}{{\mathbb R}}
\renewcommand{\S}{{\mathbb S}}

\newcommand{\Z}{{\mathbb Z}}

\newtheorem{thm}{Theorem}[section]
\newtheorem{lem}[thm]{Lemma}
\newtheorem{prop}[thm]{Proposition}
\newtheorem{coro}[thm]{Corollary}
\newtheorem{rem}[thm]{Remark}

\def\la{\langle}
\def\ra{\rangle}

\newcommand\comment[1]{}
\def\summ#1#2#3{\sum_{#1=#2}^{#3}}

\newcommand{\eps}{\varepsilon}
\renewcommand{\Re}{{\rm Re}\,}
\newcommand{\ind}{\mathds1}
\newcommand{\ds}{\displaystyle}
\def\nn#1{\left\|#1\right\|}

\def\inv{^{-1}}

\def\proba{\mathbb P}

\def\pp#1{\left(#1\right)}

\def\ccbb#1{\left\{#1\right\}}

\def\limn{\lim_{n\to\infty}}

\begin{document}\sloppy

\title[Generalized random fields and L\'evy's continuity theorem]{Generalized random fields and \\ L\'evy's continuity theorem on\\ the space of tempered distributions}

\author{Hermine Bierm\'e}
\address
{
Hermine Bierm\'e,
Laboratoire de Math\'ematiques et Applications UMR CNRS 7348,
Universit\'e de Poitiers, 
Boulevard Marie et Pierre Curie
86962 Futuroscope Chasseneuil Cedex, France.
}
\email{hermine.bierme@math.univ-poitiers.fr}

\author{Olivier Durieu}
\address{
Olivier Durieu\\
Laboratoire de Math\'ematiques et Physique Th\'eorique, UMR-CNRS 7350\\
F\'ed\'eration Denis Poisson, FR-CNRS 2964\\
Universit\'e Fran\c{c}ois--Rabelais de Tours, Parc de Grandmont, 37200 Tours, France.
}
\email{olivier.durieu@lmpt.univ-tours.fr}

\author{Yizao Wang}
\address
{
Yizao Wang,
Department of Mathematical Sciences,
University of Cincinnati,
2815 Commons Way, ML--0025,
Cincinnati, OH, 45221-0025.
}
\email{yizao.wang@uc.edu}

\date{\today}

\subjclass[2010]{60G20; 60E10; 60B10}
\keywords{Generalized random fields; tempered distributions; Lévy's continuity theorem; Minlos--Bochner's theorem}

\begin{abstract}
In this note, we recall main properties of generalized random fields and present a proof of the continuity theorem of Paul L\'evy for generalized random fields in the space of tempered distributions.  
This theorem was first proved by \citet{Fer68} in a more general setting.
The aim of this note is to provide a self-contained proof that in particular avoids the abstract theory of nuclear spaces.
\end{abstract}

\maketitle

\section{Introduction}

The notion of generalized random fields has been introduced in the 50's by \citet{Gel55}. 
Such a field is a collection of real random variables that is indexed by a 
class of functions and that satisfies linearity and certain regularity conditions. It can also be seen as a single random variable taking values in the space of generalized functions (or distributions). The name \textit{random distribution} is also used in the literature. 
The interest for generalized random fields was first due to their applications in many areas. They were used in constructive quantum field theory (see \citep{Nel73,DobMin76}), 
in stochastic differential equations in infinite-dimensional spaces (see
\citep{KalXio95}), and in sparse stochastic modeling (see \citep{FagAmiUns14}).
In comparison to classical probability theory, two theorems stand out for generalized random fields. 
The first is a Bochner-type theorem obtained by \citet{Min59} that gives sufficient conditions on a functional on the Schwartz space to be the characteristic functional of some generalized random field.
The second is a L\'evy-type continuity theorem, saying that the convergence in distribution of generalized random fields  can be determined by the pointwise convergence of the corresponding characteristic functionals, provided in addition that the limit is continuous at zero. The L\'evy's continuity theorem was obtained by \citet{Fer68} and generalized by \citet{Mey95} (see also \citep{Bou73}) in the setting of nuclear spaces, and  has found various applications in for example particle systems \citep{holley78generalized,GorNua94} and image processing \citep{FagBosUns15}.
Our own motivation came from the investigation of limit theorems for the so-called random-ball models, which will be discussed in a forthcoming paper.
In all these applications, L\'evy's continuity theorem is used for generalized random fields in the space of tempered distributions, which is 
the dual of the Schwartz space.

Although L\'evy's continuity theorem in very general setups has been known since \citet{Fer68} in the 60's, a simple and self-contained proof in 
the specific case of the space of tempered distributions, without using the general framework of nuclear spaces and their dual spaces, cannot be found in the literature.
The space of tempered distributions as the dual of Schwartz space, a countably Hilbert space, enjoys several nice properties. 
Standard references as  \citet{gelfand64generalized} and \citet{KalXio95} provide 
excellent introductions to generalized random fields 
taking values in the space of tempered distributions. However, both treatments stop at Minlos--Bochner's theorem. It  is the purpose of this paper to fill in the gap by presenting a proof of L\'evy's continuity theorem for generalized random fields, without using abstract theory on nuclear spaces.

Throughout the paper, the integer $d\ge1$ is fixed; 
$\N$ stands for the set of nonnegative integers $\{0,1,2,\dots\}$;
for $n\in\N$, $\Gamma_n$ denotes the net-square $\{0,\ldots,n\}^d$; 
for each integer $s\ge2$, $\zeta(s)$ denotes the finite constant $\sum_{n\ge 1} n^{-s}$.

\section{Generalized random fields and main results}\label{sec:2}

Recall that the \textit{Schwartz space} $\cS=\cS(\R^d)$ consists of infinitely differentiable functions $f:\R^d\to \R$ that are rapidly decreasing, that is, for all $m\in\N$ and $j=(j_1,\ldots,j_d)\in\N^d$,
\begin{equation}\label{eq:norm_mj}
\|f\|_{m,j}=\sup_{x \in \R^d}(1+|x|)^m \left|D^{j}f(x)\right| < \infty,
\end{equation}
where $D^jf(x)=\frac{\partial^{j_1}\ldots\partial^{j_d}}{\partial
x_1^{j_1}\ldots \partial x_d^{j_d}}f(x)$ denotes the partial
derivative of order $j$. 
It is a linear vector space and it is equipped
with the topology generated by the family of semi-norms $\|\cdot\|_{m,j}$, $m\in\N$ and $j\in\N^d$.
It is well known that  $\cS$ is a  Fr\'echet space (i.e.\ a locally convex metrizable complete space). 

The \textit{space of tempered distributions} $\cS'=\cS'(\R^d)$ is the topological dual of $\cS$, the space of continuous linear functionals on $\cS$. 
For $F\in\cS'$ and $f\in\cS$, the real number $F(f)$ is usually denoted by $(F,f)$.  The duality bracket $(\,\cdot,\cdot\,):\cS'\times\cS\to \R$ is a bilinear form.

 We shall work with two topologies on $\cS'$, the strong topology $\tau_s$ and the weak topology $\tau_w$.
 The \textit{strong topology}  $\tau_s$ on $\cS'$ 
is generated by the family of semi-norms
\[
 q_B(F)=\sup_{f\in B}|(F,f)|,\quad B\subset \cS \mbox{ bounded.}
\]
Recall that a subset $B$ of a topological vector space is bounded if for all neighborhood $V$ of $0$, there exists $\lambda>0$ such that $B\subset \lambda V$. 
The \textit{weak topology} on $\cS'$ 
is simply induced by the family of semi-norms $|(\,\cdot,\varphi)|$,  $\varphi \in \cS$. In particular $F_n$ converges to $F$ in $\cS'$ with respect to the weak topology, when
$\limn(F_n,\varphi)= (F,\varphi)$ for all $\varphi \in \cS$. 
We show in Section~\ref{sec:sigmafields} that the Borel $\sigma$-fields corresponding to both topologies coincide. Therefore we shall talk about the Borel $\sigma$-fields $\cB(\cS')$ of $\cS'$ without specifying the topology. 

Let $(\Omega,\cA,\P)$ be a probability space. 
By a \textit{generalized random field} defined on $(\Omega,\cA,\P)$, we refer to a random variable $X$ with values in $(\cS',\cB(\cS'))$, namely, a measurable map $X:(\Omega,\cA)\to (\cS',\cB(\cS'))$.  In this case, $X(f)$ is a real-valued random variable for all $f\in\cS$ and $f\in\cS\mapsto X(f)(\omega)$ is linear and continuous for all $\omega\in\Omega$. 
Here, we are mostly interested in the law of the generalized random field
\[
\P_X = \P\circ X\inv,
\]  the induced probability measure on $(\cS', \cB(\cS'))$. We shall nevertheless talk about convergence of generalized random fields instead of convergence of laws of generalized random fields, for the sake of simplicity. 

\medskip

A fundamental object in the investigation of generalized random fields is the characteristic functional, playing the role as the characteristic function for a random variable in $\R$.
First, recall that the law of a real random variable $Y$ defined on $(\Omega,\cA,\P)$ is uniquely determined by its characteristic function $\cL_Y$ defined as
\[
\cL_Y(t)=\E(e^{itY})=\int_{\Omega}e^{itY}d\P,\quad t\in\R.
\]
The function $\cL_Y$ is positive-definite, continuous and satisfies $\cL_Y(0)=1$. 
Here we understand that a complex-valued function $\cL$ is \textit{positive-definite} if for all $m\ge1$, all complex numbers $c_1,\ldots,c_m$, and all $t_1,\ldots,t_m\in \R$,
$ \sum_{i,j=1}^m c_i\bar c_j \cL(t_i-t_j)\ge 0$.
The well known Bochner's theorem 
states that conversely, if a map $\cL:\R\to\C$ 
is positive-definite, continuous at $0$, and $\cL(0)=1$, then there exists a real random variable $Y$ such that $\cL_Y=\cL$. 

Similarly, for a generalized random field $X$ with law $\P_X$, we define its \textit{characteristic functional} by
\begin{equation*}
\cL_X(f)=\E(e^{iX(f)})=\int_{\cS'}e^{iF(f)}d\P_{X}(F),\quad f\in \cS.
\end{equation*}
Note that $\cL_X$ is again positive-definite, continuous, and $\cL_X(0)=1$.
Here and in the sequel, we say that a functional $\cL:\cS\to\C$ is positive-definite, if for all $m\ge 1$, $c_1,\dots,c_m\in\C$ and $f_1,\dots,f_m\in\cS$, 
\[
\summ i1m\sum_{j=1}^m c_i\bar c_j\cL_X(f_i-f_j)\ge 0.
\]
The Minlos--Bochner theorem says that the converse is true.
\begin{thm}[Minlos--Bochner]\label{thm:MinlosGRF}
If a functional $\cL: \cS \rightarrow \C$ is positive-definite, continuous at $0$, and $\cL(0)=1$, there exists a generalized random field $X$ defined on a probability space $(\Omega,{\mathcal A},\mathbb{P})$ such that ${\mathcal L}_X=\cL$.
\end{thm}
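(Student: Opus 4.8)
The plan is to follow the classical route to Minlos' theorem, but to realize every step concretely on $\cS$ using the Hermite functions, thereby avoiding the general theory of nuclear spaces. Write $(h_{\mathbf k})_{\mathbf k\in\N^d}$ for the $d$-dimensional Hermite functions, an orthonormal basis of $L^2(\R^d)$ made of eigenfunctions of the harmonic oscillator with eigenvalues $\lambda_{\mathbf k}=2|\mathbf k|+d$. For $p\in\N$ I would set $\|f\|_p^2=\sum_{\mathbf k}\lambda_{\mathbf k}^{2p}|\la f,h_{\mathbf k}\ra|^2$ and let $\cS_p$ be the associated Hilbert space, with dual norm $\|F\|_{-p}^2=\sum_{\mathbf k}\lambda_{\mathbf k}^{-2p}|(F,h_{\mathbf k})|^2$, so that $\cS=\bigcap_p\cS_p$ and $\cS'=\bigcup_p\cS_{-p}$. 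The first task is to verify that the norms $\|\cdot\|_p$ generate the original topology of $\cS$; this makes the hypothesis ``$\cL$ continuous at $0$'' equivalent to: for every $\eps>0$ there exist $p\in\N$ and $\delta>0$ such that $|\cL(f)-1|\le\eps$ whenever $\|f\|_p\le\delta$.

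Next I would build the finite-dimensional distributions. For any finite family $f_1,\dots,f_m\in\cS$, the map $(t_1,\dots,t_m)\mapsto\cL\pp{\sum_j t_jf_j}$ is positive-definite on $\R^m$, continuous at $0$, and equals $1$ at the origin, so classical Bochner's theorem produces a probability measure on $\R^m$ with this characteristic function. Applied to finite blocks of the Hermite basis and combined with Kolmogorov's extension theorem, this yields a probability measure $\mu$ on $\R^{\N^d}$ whose coordinates $X_{\mathbf k}$ play the role of the Hermite coefficients $(F,h_{\mathbf k})$ and satisfy $\esp\big[\exp\big(i\sum_j t_jX_{\mathbf k_j}\big)\big]=\cL\pp{\sum_j t_jh_{\mathbf k_j}}$.

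The heart of the proof is to show that $\mu$ is carried by a single $\cS_{-q}\subset\cS'$. Here the nuclear structure enters through the elementary fact that the inclusion $\cS_q\hookrightarrow\cS_p$ is Hilbert--Schmidt as soon as $q-p>d/2$: since $\|h_{\mathbf k}\|_p^2=\lambda_{\mathbf k}^{2p}$, one has $\sum_{\mathbf k}\lambda_{\mathbf k}^{2(p-q)}=\sum_{\mathbf k}(2|\mathbf k|+d)^{-2(q-p)}<\infty$, a sum comparable to $\sum_{m\ge1}m^{d-1-2(q-p)}$ and hence controllable through the constants $\zeta(s)$. To exploit this I would use the Gaussian smoothing inequality: for a probability measure $\nu$ on $\R^n$ with characteristic function $\phi$ and any $r>0$,
\[
\nu\pp{\ccbb{x:|x|>r}}\le\frac1{1-e^{-1/2}}\pp{1-\int_{\R^n}\phi\pp{r\inv y}\,d\gamma_n(y)},
\]
where $\gamma_n$ is the standard Gaussian on $\R^n$, obtained by integrating $1-e^{-|x|^2/(2r^2)}$ against $\nu$. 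I would combine this with the quadratic control $1-\Re\cL(g)\le 2\eps(1+\|g\|_p^2/\delta^2)$ — which follows from positive-definiteness via $|\cL(g)|\le1$ and $1-\Re\cL(ng)\le n^2(1-\Re\cL(g))$ — and with the identity $\int\big\|\sum_{\mathbf k}y_{\mathbf k}\lambda_{\mathbf k}^{-q}h_{\mathbf k}\big\|_p^2\,d\gamma=\sum_{\mathbf k}\lambda_{\mathbf k}^{2(p-q)}$. Applying the inequality to the law of the rescaled block $\pp{\lambda_{\mathbf k}^{-q}X_{\mathbf k}}_{\mathbf k\in\Gamma_n}$ then gives a bound on $\mu\pp{\sum_{\mathbf k\in\Gamma_n}\lambda_{\mathbf k}^{-2q}X_{\mathbf k}^2>r^2}$ that is \emph{uniform in $n$}; letting $n\to\infty$ and then $\eps\to0$ shows $\mu\pp{\sum_{\mathbf k}\lambda_{\mathbf k}^{-2q}X_{\mathbf k}^2<\infty}=1$ for any $q>p+d/2$, i.e.\ $\mu$ is concentrated on $\cS_{-q}$.

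I expect this last concentration step to be the main obstacle, since it is precisely where finite-dimensionality must be replaced by Hilbert--Schmidt summability, and the bookkeeping of the Gaussian integral together with the passage to the limit in the number of coordinates requires care. It then remains to transfer $\mu$ to a Borel measure $\P_X$ on $\cS'$, using that the coordinate maps $F\mapsto(F,h_{\mathbf k})$ generate $\cB(\cS')$ and that, by the coincidence of the weak and strong Borel $\sigma$-fields established in Section~\ref{sec:sigmafields}, $\cS_{-q}$ embeds measurably into $(\cS',\cB(\cS'))$. Finally I would read off from the construction that the characteristic functional of the resulting generalized random field $X$ agrees with $\cL$ on the span of the Hermite functions, and hence, since finite Hermite combinations are dense in $\cS$ and both $\cL_X$ and $\cL$ are continuous, on all of $\cS$.
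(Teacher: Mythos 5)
Your proposal is correct and follows essentially the same route as the paper's proof: Bochner's theorem applied to finite Hermite blocks plus Kolmogorov's extension theorem to produce a cylinder measure on $\R^{\N^d}$, a quadratic domination $1-\Re\cL(g)\le \eps + C\|g\|_p^2$ extracted from continuity at $0$, and then the Gaussian-integration trick over finite coordinate blocks with summable weights to force concentration on the dual — this is precisely the paper's Lemma~\ref{lem:bound} (with the product weights $(1+n)^{-q}$, for which $q=p+1$ suffices, in place of your oscillator weights $\lambda_{\mathbf k}^{-q}$ requiring $q>p+d/2$), packaged as a tail estimate with the constant $1/(1-e^{-1/2})$ instead of an expectation bound followed by $\sigma\downarrow 0$. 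The one slip is your claim that $\mu$ is carried by a \emph{single} $\cS_{-q}$: since $p$ and $\delta$ (hence $q$) depend on $\eps$, your bounds actually yield $\mu(\cS_{-q(\eps)})\ge 1-C\eps$ for each $\eps$, hence only $\mu(\cS')=\mu\pp{\bigcup_q \cS_{-q}}=1$ — which is all the theorem needs and matches the paper's conclusion, but full mass on one fixed $\cS_{-q}$ does not follow (and is false in general).
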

In order to be self-contained, we give a proof of this theorem in Section \ref{sec:Milnos} as well as a similar result for random variables taking values in $\cS$ (Theorem~\ref{thm:MinlosS}), which will be needed in the proof of L\'evy's continuity theorem.
Minlos--Bochner's theorem has a direct application in proving existence of generalized random fields. 
For example, consider the functional $\cL:\cS\to\C$ defined by $\cL(f)=\exp(-{\|f\|_2^2}/2)$, where $\|f\|_2$ denotes the $L^2$-norm of $f$. Theorem~\ref{thm:MinlosGRF} yields the existence of a generalized random field $X$ such that $\cL_X(\cdot)=\cL(\cdot)$. This field $X$ is 
known as the standard generalized Gaussian field in the literature (see e.g.~\citep{Kuo96,ito83}).

More generally, there are situations that  a collection of real random variables $X=(X(f))_{f\in \cS}$ are given, such that for all $m\ge 1$, $a_1,\dots,a_m\in\R, f_1,\dots,f_m\in\cS$, 
\begin{equation}\label{eq:linear}
X\pp{\summ i1ma_if_i} = \summ i1m a_iX(f_i), \mbox{ a.s.}
\end{equation}
Although a priori this does not necessarily imply that $(X(f))_{f\in\cS}$ is determined by a generalized random field $X$, Minlos--Bochner's theorem yields a simple criterion for $X$ to admit a version as a generalized random field.
By a version of $X$, we mean a generalized random field $\tilde X$, not necessarily defined on the same probability space as $X$, such that $(X(f))_{f\in\cS}$ and $(\tilde X(f))_{f\in\cS}$ have the same finite-dimensional distributions. Given a collection of random variables $(X(f))_{f\in\cS}$, with slightly abused notation we write ${\mathcal L}_X(f)=\mathbb{E}(e^{iX(f)})$. The linearity~\eqref{eq:linear} shall imply that $\cL_X(f)$ is positive-definite. We hence have the following.

\begin{coro}\label{cor:MinlosGRF}
Let $X=(X(f))_{f\in \cS}$ be a collection of real random variables on $(\Omega,{\mathcal A},\mathbb{P})$ that are linear in the sense of~\eqref{eq:linear}.
If ${\mathcal L}_X(f)$ is continuous at zero on $\cS$, then $X$ admits a version that is a generalized random field.
\end{coro}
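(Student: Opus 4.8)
The plan is to apply the Minlos--Bochner theorem (Theorem~\ref{thm:MinlosGRF}) to the functional $\cL_X$ and then to verify that the resulting field is a version of $X$. To invoke the theorem I would check its three hypotheses for $\cL_X:\cS\to\C$. Continuity at $0$ is assumed, and $\cL_X(0)=1$ follows immediately from the linearity~\eqref{eq:linear}: taking $m=1$, $a_1=0$ yields $X(0)=0$ almost surely, so $\cL_X(0)=\E(e^{iX(0)})=1$. The remaining hypothesis, positive-definiteness, is where the linearity assumption does the real work.

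For positive-definiteness I would fix $m\ge1$, $c_1,\dots,c_m\in\C$ and $f_1,\dots,f_m\in\cS$, and apply~\eqref{eq:linear} to the finitely many differences $f_i-f_j$ to obtain $X(f_i-f_j)=X(f_i)-X(f_j)$ almost surely for all $i,j$ at once. The key identity is then
\[
\summ i1m\sum_{j=1}^m c_i\bar c_j\,\cL_X(f_i-f_j)
=\E\pp{\abs{\summ i1m c_i e^{iX(f_i)}}^2}\ge 0,
\]
obtained by writing $\cL_X(f_i-f_j)=\E(e^{iX(f_i)}e^{-iX(f_j)})$ and recognizing the double sum as a squared modulus. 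This delivers all three hypotheses and hence, by Minlos--Bochner, a generalized random field $\tilde X$ with $\cL_{\tilde X}=\cL_X$.

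The last step is to confirm that $\tilde X$ is a version of $X$, i.e.\ that they have the same finite-dimensional distributions. I would fix $f_1,\dots,f_n\in\cS$ and $t_1,\dots,t_n\in\R$ and compute the joint characteristic function of $(X(f_1),\dots,X(f_n))$: using~\eqref{eq:linear} once more it equals $\cL_X(\sum_k t_kf_k)$, and the same computation for $\tilde X$ (whose linearity holds pointwise in $\omega$ by definition of a generalized random field) gives $\cL_{\tilde X}(\sum_k t_kf_k)=\cL_X(\sum_k t_kf_k)$. Since the characteristic functions of the two $\R^n$-valued vectors agree everywhere, their laws coincide, and as $n$ and the $f_k$ were arbitrary the finite-dimensional distributions match. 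I expect no serious obstacle here; the only point demanding care is that the null sets in~\eqref{eq:linear} depend on the chosen test functions, but since each finite-dimensional distribution involves only finitely many $f_k$ this bookkeeping is harmless, and the corollary is essentially a direct application of Minlos--Bochner.
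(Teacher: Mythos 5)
Your proof is correct and follows exactly the route the paper intends: the paper states that linearity of~\eqref{eq:linear} yields positive-definiteness of $\cL_X$ and then declares the corollary a direct consequence of Theorem~\ref{thm:MinlosGRF} (via Theorem~\ref{thm:MinlosS'}), which is precisely the argument you spell out, including the identification of finite-dimensional distributions through the characteristic functional. Your write-up simply makes explicit the details (the squared-modulus identity, $\cL_X(0)=1$, and the null-set bookkeeping) that the paper leaves to the reader.
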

Our notion of version is  weaker than the notion of regularization in \citep{ito83} where $\tilde X$ is required to be on the same probability space, and $\proba(\tilde X(f) = X(f)) = 1$ for all $f\in\cS$. However, our notion of version suffices for many applications on convergence in distribution of generalized random fields.

Another important theorem concerning characteristic functional is the L\'evy's continuity theorem that characterizes convergence in distribution. 
Let $(X_n)_{n\ge 1}$ and $X$ be generalized random fields 
with laws $(\P_{X_n})_{n\ge 1}$ and $\P_X$ respectively.
We say that $X_n$ converges in distribution to $X$ with respect to the strong topology (of $\cS'$) if 
\begin{equation}\label{eq:convergence}
\limn\int_{\cS'} \varphi(F) d\P_{X_n}(F)= \int_{\cS'} \varphi(F) d\P_X(F),\mbox{ for all $\varphi\in\cC_b(\cS',\tau_s)$,}
\end{equation}
where $\cC_b(\cS',\tau_s)$ is the space of bounded continuous functions on $\cS'$ given the strong topology.

The main result of the paper is a self-contained proof of the following version of L\'evy's continuity theorem for generalized random fields.
\begin{thm}\label{thm:LevyGRF}
Let $(X_n)_{n\ge 1}$ be a sequence of generalized random fields.
If $\cL_{X_n}$ converges pointwise to a functional $\cL:\cS\to \C$ which is continuous at $0$, then there exists a generalized random field $X$ such that $\cL_X=\cL$ and $X_n$ converges in distribution to $X$ with respect to the strong topology.
\end{thm}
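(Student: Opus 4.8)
The plan is to combine Minlos--Bochner's theorem with a uniform tightness estimate of Minlos type, organized in four stages. \emph{First, existence of the limit field.} Since each $\cL_{X_n}$ is positive-definite and $\cL_{X_n}\to\cL$ pointwise, the limit $\cL$ is again positive-definite, and $\cL(0)=\limn\cL_{X_n}(0)=1$. Together with the assumed continuity of $\cL$ at $0$, Theorem~\ref{thm:MinlosGRF} produces a generalized random field $X$ with $\cL_X=\cL$; it then remains only to prove that $X_n\to X$ in distribution for $\tau_s$.

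\emph{Reduction.} I would reduce this convergence to two ingredients: (i) convergence of all finite-dimensional distributions, and (ii) tightness of $(\P_{X_n})$ in $(\cS',\tau_s)$. For (i), fix $f_1,\dots,f_k\in\cS$; for every $(t_1,\dots,t_k)\in\R^k$ the characteristic function of $(X_n(f_1),\dots,X_n(f_k))$ equals $\cL_{X_n}(\sum_i t_if_i)\to\cL(\sum_i t_if_i)$, and the limit is continuous at the origin of $\R^k$ because $\cL$ is continuous at $0\in\cS$; the classical Lévy continuity theorem on $\R^k$ (via Cramér--Wold) then gives joint convergence in distribution to $(X(f_1),\dots,X(f_k))$. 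Granting (ii), any subsequence of $(\P_{X_n})$ has a further weakly convergent subsequence whose limit $Q$ has, by (i), characteristic functional $\cL$, hence $Q=\P_X$ by the uniqueness part of Minlos--Bochner; this forces the whole sequence to converge to $\P_X$.

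\emph{Tightness.} Here I would use the countably Hilbert structure of $\cS$: an increasing family of Hilbertian norms $\|\cdot\|_p$ generating its topology, with dual Hilbert spaces $(\cS_{-p},\|\cdot\|_{-p})$, such that for $p$ large the relevant inclusion is Hilbert--Schmidt. Fixing an orthonormal basis $(e_j)\subset\cS$ of $\cS_p$ and writing $S_N(F)=\sum_{j\le N}(F,e_j)^2$, the Gaussian identity $e^{-\frac t2 S_N(F)}=\E_\xi e^{i(F,\sqrt t\sum_{j\le N}\xi_j e_j)}$ with $(\xi_j)$ i.i.d.\ standard normal yields, after integrating in $F$ and a Chebyshev bound on $\{\|F\|_{-p}>R\}$ together with monotone convergence in $N$, the Minlos estimate
\[
\P_{X_n}(\|F\|_{-p}>R)\le\frac{1}{1-e^{-tR^2/2}}\,\sup_N\E_\xi\Big[1-\Re\cL_{X_n}\Big(\sqrt t\summ j1N\xi_j e_j\Big)\Big].
\]
The Hilbert--Schmidt property gives $\E_\xi\|\sqrt t\sum_{j\le N}\xi_j e_j\|_{p'}^2\le t\,\|\iota\|_{\mathrm{HS}}^2$ for a coarser norm $\|\cdot\|_{p'}$, so the smoothed test elements concentrate near $0$ in $\cS_{p'}$; combined with the uniform equicontinuity discussed below, the right-hand side is made $\le\eps$ uniformly in $n$ by first choosing $p,p'$ and $t$ small and then $R$ large. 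Since $\cS$ is a Montel space, the ball $\{\|F\|_{-p}\le R\}$ is $\tau_s$-compact, so this is exactly tightness in $(\cS',\tau_s)$; Prokhorov's theorem is then applied after realizing the $\P_{X_n}$ on the separable Hilbert space $\cS_{-q}$ (with $\cS_{-p}\hookrightarrow\cS_{-q}$ compact), on whose bounded sets the $\tau_s$- and $\|\cdot\|_{-q}$-topologies coincide, so that the weak limit transfers back to $(\cS',\tau_s)$.

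\emph{Main obstacle.} The crux is the \emph{uniform in $n$} equicontinuity at $0$: for every $\eps>0$ one must find a single Hilbertian neighborhood $\{\|f\|_{p'}<\delta\}$ on which $\sup_n(1-\Re\cL_{X_n}(f))\le\eps$. Pointwise convergence with a continuous-at-$0$ limit does not give this for free in infinite dimensions; nuclearity is what rescues us, because a ball of a finer Hilbert norm is compact in a coarser one, reducing the control to finitely many points where pointwise convergence applies, while the positive-definiteness inequality $|\cL_{X_n}(f)-\cL_{X_n}(h)|^2\le2(1-\Re\cL_{X_n}(f-h))$ propagates the bound to the whole ball. A second, intertwined difficulty is that the Gaussian-smoothed elements $\sqrt t\sum_j\xi_j e_j$ lie in $\cS_{p'}\setminus\cS$, whereas pointwise convergence of $\cL_{X_n}$ is known only on $\cS$; this is why the sum is truncated at $N$, the limit in $n$ taken by dominated convergence for each fixed $N$, and the passage $N\to\infty$ controlled uniformly through the Hilbert--Schmidt bound rather than by directly exchanging the two limits. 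Making these two uniformities cooperate is the heart of the proof.
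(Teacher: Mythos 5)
Your overall architecture coincides with the paper's: existence of the limit field via Minlos--Bochner, reduction of convergence to tightness plus identification of subsequential limits through their characteristic functionals, a Gaussian-smoothing Minlos estimate to convert control of $1-\Re\cL_{X_n}$ near $0$ into tightness (the paper's Lemma~4.2 and Lemma~5.2), and strong compactness of dual balls from the Montel property (the paper's Proposition~6.1). However, the step you yourself single out as the crux --- the uniform-in-$n$ equicontinuity of $(\Re\cL_{X_n})$ at $0$ --- is not actually proved, and both mechanisms you sketch for it are circular. The finite-cover argument fails twice over: the set that must be controlled, a ball $\{\|f\|_{p'}<\delta\}$ of a \emph{coarser} Hilbert norm, is not compact (it is balls of finer norms that are compact in coarser ones, not the reverse); and even on a genuinely compact set, propagating bounds from finitely many centers $h_i$ via $|\cL_{X_n}(f)-\cL_{X_n}(h_i)|^2\le 2\bigl(1-\Re\cL_{X_n}(f-h_i)\bigr)$ requires uniform smallness of $1-\Re\cL_{X_n}(f-h_i)$ for $f-h_i$ in a small ball, uniformly in $n$ --- which is precisely the equicontinuity being established. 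The Hilbert--Schmidt smoothing device has the same defect: to make $\E_\xi\bigl[1-\Re\cL_{X_n}\bigl(\sqrt t\sum_{j\le N}\xi_je_j\bigr)\bigr]$ small uniformly in $n$ and $N$, the bound $\E_\xi\|\sqrt t\sum_{j\le N}\xi_je_j\|_{p'}^2\le t\|\iota\|_{\mathrm{HS}}^2$ must be combined with a uniform modulus for $1-\Re\cL_{X_n}$ on a coarse ball, i.e.\ with the very statement under proof. So your proposal contains a correct statement of the key lemma, together with an accurate diagnosis of why it is hard, but not a proof of it.

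The paper closes this circle with an idea of a different kind (its Lemma~5.3), which is worth contrasting with your sketch. It argues by contradiction: if equicontinuity fails, there are $\eps>0$, a sequence $a_k\to0$ in $\S$ (after extraction, $2^ka_k\to0$) and indices $n_k$ with $1-\Re\cL_{\mu_{n_k}}(a_k)\ge\eps$. From the failure sequence itself one builds $Q(b)=\sum_{k\ge k_0}\la b,a_k\ra^2$, a negative-definite functional on $\S'$, continuous at $0$; then $U=e^{-Q}$ is positive-definite, and the \emph{dual} Minlos--Bochner theorem (the paper's Theorem~4.4, producing measures on $\S$, not on $\S'$) yields a probability measure $\nu$ on $\S$ with $\cL_\nu=U$. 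Because $\nu$ is supported on $\S$ --- exactly where pointwise convergence of the $\cL_{\mu_n}$ is assumed --- Fubini and dominated convergence legitimately give $\int_{\S'}U\,d\mu_n=\int_\S\cL_{\mu_n}\,d\nu\to\int_\S\cL_{\mu}\,d\nu=\int_{\S'}U\,d\mu$, and choosing $k_0$ so that $\int_{\S'}(1-U)\,d\mu<\eps/2$ forces $1-\Re\cL_{\mu_n}(a_k)<M\eps$ for all large $n$ and all $k\ge k_0$, with $M=\sup_{x\ge0}(1-\cos x)/(1-e^{-x^2})<\infty$, contradicting the assumed failure. This randomized-test-function construction is the infinite-dimensional substitute for integrating $1-\phi_n$ over a neighborhood of $0$ in the classical proof on $\R$, and it resolves in one stroke both difficulties you flag: the needed uniformity in $n$ (obtained from dominated convergence against a fixed measure, not from a pointwise modulus), and the fact that Gaussian-smoothed elements fall outside $\cS$ (avoided because the smoothing measure lives on $\S$ itself). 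Without this --- or some equivalent mechanism --- your argument does not go through.
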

Another natural notion of convergence in the space $\cS'$ is the convergence in distribution with respect to the weak topology, which is defined similarly as in~\eqref{eq:convergence}, with test functions from $\cC_b(\cS',\tau_w)$ instead of $\cC_b(\cS',\tau_s)$. It is straightforward to show that the convergence with respect to the strong topology implies the convergence with respect to the weak topology, and that the latter convergence implies the convergence of the characteristic functionals. The main theorem above shows that the three types of convergence are equivalent.

\begin{coro}\label{coro:Levy}
Let $(X_n)_{n\ge 1}$, $X$ be generalized random fields. The following conditions are equivalent:
\begin{enumerate}[(i)]
\item $X_n$ converges in distribution to $X$ with respect to the strong topology,
\item $X_n$ converges in distribution to $X$ with respect to the weak topology,
\item $\limn\cL_{X_n}(f)= \cL_X(f)$ for all $f\in\cS$,
\item $X_n(f)$ converges in distribution to $X(f)$ in $\R$ for all $f\in\cS$.
\end{enumerate}
\end{coro}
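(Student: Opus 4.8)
The plan is to establish the chain (i) $\Rightarrow$ (ii) $\Rightarrow$ (iii) $\Rightarrow$ (i) together with the separate equivalence (iii) $\Leftrightarrow$ (iv), which together yield all four equivalences. The implications (i) $\Rightarrow$ (ii) and (ii) $\Rightarrow$ (iii) are exactly the two facts recorded in the discussion preceding the corollary: the strong topology is finer than the weak one, so $\cC_b(\cS',\tau_w)\subset\cC_b(\cS',\tau_s)$ and every strong convergence is a weak one; and for weak convergence the bounded continuous test functions $F\mapsto\cos(F,\varphi)$ and $F\mapsto\sin(F,\varphi)$, each continuous on $(\cS',\tau_w)$ by the very definition of the weak topology, give $\limn\cL_{X_n}(\varphi)=\cL_X(\varphi)$ for every $\varphi\in\cS$. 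I would state these two steps briefly and move on.

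The crucial step is (iii) $\Rightarrow$ (i), where Theorem~\ref{thm:LevyGRF} does the work. Assuming (iii), the functionals $\cL_{X_n}$ converge pointwise to $\cL_X$; since $X$ is a genuine generalized random field, its characteristic functional $\cL_X$ is continuous on $\cS$, in particular at $0$. Theorem~\ref{thm:LevyGRF} then produces a generalized random field $Y$ with $\cL_Y=\cL_X$ such that $X_n$ converges in distribution to $Y$ with respect to the strong topology. It remains only to identify the law of $Y$ with that of $X$. For this I would use that the characteristic functional pins down the finite-dimensional distributions: for $f_1,\dots,f_k\in\cS$ and $t_1,\dots,t_k\in\R$,
\[
\E\exp\Big(i\summ j1k t_jY(f_j)\Big)=\cL_Y\Big(\summ j1k t_jf_j\Big)=\cL_X\Big(\summ j1k t_jf_j\Big)=\E\exp\Big(i\summ j1k t_jX(f_j)\Big),
\]
so by uniqueness of characteristic functions on $\R^k$ the vectors $(Y(f_1),\dots,Y(f_k))$ and $(X(f_1),\dots,X(f_k))$ have the same law. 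Since the cylinder sets generate $\cB(\cS')$ (Section~\ref{sec:sigmafields}), equal finite-dimensional distributions force $\P_Y=\P_X$, and hence $X_n$ converges in distribution to $X$ strongly, which is (i).

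Finally I would close the loop with (iii) $\Leftrightarrow$ (iv) by reducing to the classical one-dimensional L\'evy continuity theorem. Fixing $f\in\cS$, the map $t\mapsto\cL_{X_n}(tf)=\E e^{itX_n(f)}$ is precisely the characteristic function of the real random variable $X_n(f)$, and likewise $t\mapsto\cL_X(tf)$ is that of $X(f)$. If (iii) holds, applying it to $g=tf$ for each $t\in\R$ gives pointwise convergence of these characteristic functions, whence $X_n(f)\to X(f)$ in distribution in $\R$ by the one-dimensional continuity theorem, which is (iv); conversely (iv) yields convergence of the characteristic functions, and specialising at $t=1$ reads $\cL_{X_n}(f)\to\cL_X(f)$, which is (iii). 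The only genuinely nontrivial ingredient is Theorem~\ref{thm:LevyGRF}, invoked in (iii) $\Rightarrow$ (i); every remaining step is either quoted from the preceding discussion or a direct application of the real-valued L\'evy theorem, so I expect no serious obstacle beyond correctly handling the uniqueness of the law from its characteristic functional.
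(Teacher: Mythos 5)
Your proposal is correct and follows essentially the same route as the paper, which proves the corollary via the chain (i) $\Rightarrow$ (ii) $\Rightarrow$ (iii) (both steps straightforward from the topologies) and closes the loop with Theorem~\ref{thm:LevyGRF}. The extra details you supply --- identifying $\P_Y=\P_X$ through finite-dimensional distributions and the coincidence of the cylinder and Borel $\sigma$-fields, and handling (iv) via the classical one-dimensional L\'evy theorem --- are exactly the points the paper leaves implicit, and you fill them in correctly.
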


As we pointed out before, Theorem~\ref{thm:LevyGRF} and the discussion above are a special case of a more general result of \citep{Fer68,Mey95}, where, however, the background on nuclear spaces are required. The aim of the paper is to present a simpler and self-contained proof in this special case of the space $\cS'$ that cannot be found in the literature, to the best of our knowledge.

\medskip

The paper is organized as follows. Some preliminary results on Schwartz space and the space of tempered distributions are provided in Section~\ref{sec:spaces}. Section~\ref{sec:Milnos} establishes Milnos--Bochner's theorems. Section~\ref{sec:Levy} proves the L\'evy's continuity theorem for generalized random fields. Section~\ref{sec:appendix} provides some topological properties of $\cS$ and $\cS'$ that are needed in the proofs.


\section{The Schwartz space and the space of tempered distributions}\label{sec:spaces}

\subsection{The Schwartz space $\cS$ as a countably Hilbert space}

A countable orthonormal basis (a complete orthonormal system) of $L^2(\R)$ is given by the family of Hermite functions $\{h_n\}_{n\in\N}$, defined as
\[
 h_n(x)=(-1)^n(2^n n! \sqrt{\pi})^{-1/2} e^{\frac{x^2}{2}} \left(\frac{d}{dx}\right)^ne^{-x^2}, \quad n\in\N, x\in\R.
\]
On $\R^d$, we define the Hermite functions $\{h_n\}_{n\in\N^d}$ by
\[
 h_n(x)=\prod_{i=1}^d h_{n_i}(x_i),\quad n=(n_1,\ldots,n_d)\in\N^d,\, x=(x_1,\ldots,x_d)\in\R^d.
\]
The family of Hermite functions on $\R^d$ is an orthonormal basis of $L^2(\R^d)$. 
Any function $f\in L^2(\R^d)$ can be written as
\[
f=\sum_{n\in\N^d}\langle f,h_n \rangle h_n,
\]
where $\langle\cdot,\cdot\rangle$ denotes the inner product in $L^2(\R^d)$ given by $\langle f,g\rangle=\int_{\R^d} f(x) g(x) dx$.
We can then define a family of increasing norms on $L^2(\R^d)$ by
\begin{equation}\label{eq:nn_p}
\|f\|_p^2=\sum_{n\in\N^d}(1+n)^{2p} \langle f,h_n \rangle^2,\quad p\in\N, \, f\in L^2(\R^d),
\end{equation}
where here and in the sequel, we write
\begin{equation}\label{eq:1+n}
(1+n)^{q}=\prod_{i=1}^d(1+n_i)^{q}\quad\mbox{ for }\quad q\in\Z, n=(n_1,\ldots,n_d)\in\N^d.
\end{equation}
For each $p\in\N$, we denote by $\cS_p=\cS_p(\R^d)$ the space of all $L^2(\R^d)$-functions such that $\|f\|_p<\infty$. Note that each $\|\cdot\|_p$ is the norm associated to the inner product
\[
  \langle f,g\rangle_p
  = \sum_{n\in\N^d}(1+n)^{2p} \langle f,h_n \rangle \langle g,h_n \rangle.
\]
The spaces $(\cS_p)_{p\in\N}$ play a crucial role in our study.

For each $p\in\N$, $\cS_p$ is a separable Hilbert space with a countable orthonormal basis given by $\{h_n^{(p)}:=(n+1)^{-p}h_n\,,\,n\in\N^d\}$.
Note that $\cS_0$ is just the space $L^2(\R^d)$, and for each $p\in\N$, $\cS_{p+1}\subset\cS_p$ with $\|\cdot\|_p\le\|\cdot\|_{p+1}$.
It is well known that $\cS=\bigcap_{p\ge 0}\cS_p$ and that both families of 
semi-norms $\{\|\cdot\|_{m,j}\mid m\in\N, j\in\N^d\}$ and $\{\|\cdot\|_{p}\mid p\in\N\}$ generate the same topology on $\cS$. See for example \citep[Theorem 1.3.2]{KalXio95}. A basis of neighborhoods of $0$ in $\cS$ is then given by the sets
\begin{equation}\label{eq:B_p}
 B_p(r)=\{f\in\cS \mid \|f\|_p<r\},\quad p\in\N,r>0.
\end{equation}
Concerning the space of tempered distributions, we have $\cS'=\bigcup_{p\ge 0}\cS_p'$, where $\cS_p'$ is the dual of $\cS_p$. 
On $\cS_p'$, we consider the dual norm defined by 
\[
\|F\|_p'=\sup_{\|f\|_p\le 1}|(F,f)| \mbox{ for all } F\in\cS_p'. 
\]

\subsection{The multi-sequence representation of $\cS$ and $\cS'$}

The representation of $\cS$ as a countably Hilbert space is 
convenient
 to establish the main theorems in Sections~\ref{sec:Milnos}~and~\ref{sec:Levy}.
As soon as we have this representation, we can think of the spaces $\cS$ and $\cS'$ as subspaces of $\R^{\N^d}$, the space of real-valued multi-sequences. It gives a more concrete description of these spaces and it simplifies our arguments in the sequel.
In this section we recall this multi-sequence representation as  introduced by \citet{ReeSim80}.

\medskip

Let $\R^{\N^d}$ be the set of multi-sequences with real values. For $a=(a_n)_{n\in\N^d}$ and $b=(b_n)_{n\in\N^d}$ such that $\sum_{n\in\N^d}|a_n b_n|<\infty$, we denote 
\[
 \langle a,b \rangle=\sum_{n\in\N^d}a_n b_n.
\]
For all integer $p\in\Z$, and all multi-sequence $a=(a_n)_{n\in\N^d}\in \R^{\N^d}$, we set
\[
 \|a\|_p^2=\sum_{n\in\N^d}(1+n)^{2p}a_n^2.
\]
We define 
$$
\S_p=\{a\in \R^{\N^d} \mid  \|a\|_p<\infty\}, \quad p\in\Z,\;\text{ and }\;\S=\bigcap_{p\in\N}\S_p.
$$
Note that for each $p\in\Z$, $\S_p$ is a Hilbert space with respect to the inner product $ \langle a,b \rangle_p=\sum_{n\in\N^d}(1+n)^{2p}a_n b_n$, and $\|a\|_p^2 = \langle a,a\rangle_p$. The set $\S$ is then given the topology generated by the family of Hilbert
norms $\|\cdot\|_p$, $p\in\Z$.

\medskip

For each $p\in\Z$, we let $\S_p'$ denote the dual space of the space $\S_p$. The dual norm on $\S_p'$ is given by $\|F\|_p'=\sup_{\|a\|_p\le 1}|(F,a)|$, and we have $\S'=\bigcup_{p\ge 0}\S_p'$. 
\begin{lem}\label{lem:isom}
For each $p\in\Z$, the map
$b\mapsto \langle b,\cdot \rangle$
is an isometry from $\S_{-p}$ to $\S_{p}'$. 
\end{lem}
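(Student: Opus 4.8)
The plan is to exhibit $\Phi\colon b\mapsto\langle b,\cdot\rangle$ as a norm-preserving linear bijection between $\S_{-p}$ and $\S_p'$, and the whole argument reduces to weighted Cauchy--Schwarz estimates. First I would check that $\Phi(b)$ is a well-defined element of $\S_p'$ for each $b\in\S_{-p}$. Splitting each term as $b_na_n=\bigl((1+n)^{-p}b_n\bigr)\bigl((1+n)^{p}a_n\bigr)$ and applying the Cauchy--Schwarz inequality with respect to the counting measure on $\N^d$ gives
\[
\Bigl|\sum_{n\in\N^d}b_na_n\Bigr|\le\|b\|_{-p}\,\|a\|_p,
\]
so the defining series converges absolutely, $\Phi(b)$ is linear and continuous on $\S_p$, and $\|\Phi(b)\|_p'\le\|b\|_{-p}$.

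For the reverse inequality I would test $\Phi(b)$ against the explicit sequence $a=(a_n)_{n\in\N^d}$ defined by $a_n=(1+n)^{-2p}b_n$. A direct computation shows $\|a\|_p^2=\|b\|_{-p}^2<\infty$, so $a\in\S_p$, while $(\Phi(b),a)=\langle b,a\rangle=\|b\|_{-p}^2$. Hence, for $b\ne0$, $\|\Phi(b)\|_p'\ge(\Phi(b),a)/\|a\|_p=\|b\|_{-p}$, and together with the previous bound this gives $\|\Phi(b)\|_p'=\|b\|_{-p}$. Since $\Phi$ is evidently linear in $b$, it is an isometric embedding of $\S_{-p}$ into $\S_p'$.

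It remains to prove surjectivity, so that $\Phi$ is in fact an isometric isomorphism. Given $F\in\S_p'$, I would recover its coefficient sequence by setting $b_n=(F,e_n)$, where $e_n$ is the multi-sequence equal to $1$ at index $n$ and $0$ elsewhere (note $e_n\in\S_p$). Testing $F$ against the finitely supported truncations $a^\Lambda$ with $a^\Lambda_n=(1+n)^{-2p}b_n$ for $n$ in a finite set $\Lambda$ and $0$ otherwise, the identity $(F,a^\Lambda)=\|a^\Lambda\|_p^2$ combined with $|(F,a^\Lambda)|\le\|F\|_p'\,\|a^\Lambda\|_p$ yields $\sum_{n\in\Lambda}(1+n)^{-2p}b_n^2\le(\|F\|_p')^2$ uniformly in $\Lambda$; letting $\Lambda\uparrow\N^d$ shows $b\in\S_{-p}$. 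By construction $F$ and $\Phi(b)$ agree on all finitely supported sequences, and both are continuous, so they coincide on all of $\S_p$ by the density of finitely supported sequences. Equivalently, one can invoke the Riesz representation theorem on the Hilbert space $\S_p$: there is $c\in\S_p$ with $(F,a)=\langle c,a\rangle_p$, and $b_n=(1+n)^{2p}c_n$ then defines the required preimage.

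I expect the surjectivity step to be the only point requiring genuine care: the isometry is an immediate two-line Cauchy--Schwarz computation, whereas identifying $\S_p'$ with all of $\S_{-p}$ requires verifying that the coefficient sequence $b_n=(F,e_n)$ lies in $\S_{-p}$ and that finitely supported sequences are dense in $\S_p$. Both are routine---density because any $a\in\S_p$ is approximated in $\|\cdot\|_p$ by its truncations---so no serious obstacle is anticipated.
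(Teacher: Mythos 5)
Your proof is correct, and its isometry half coincides with the paper's own argument: the same weighted Cauchy--Schwarz bound $|\langle b,a\rangle|\le\|b\|_{-p}\|a\|_p$, and essentially the same test sequence $a_n=(1+n)^{-2p}b_n$ (the paper merely normalizes it so that $\|a\|_p=1$, which is immaterial). Where you genuinely diverge is surjectivity. The paper runs the classical Riesz-representation argument inside the Hilbert space $\S_p$: it picks a unit vector $c$ orthogonal to ${\rm Ker}(F)$, sets $b_n=F(c)c_n(1+n)^{2p}$, and decomposes an arbitrary $a\in\S_p$ along $c$ and the kernel; this leans on the orthogonal projection theorem, hence on completeness of $\S_p$. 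You instead extract the coefficients $b_n=(F,e_n)$ from the standard unit sequences, get the uniform bound $\sum_{n\in\Lambda}(1+n)^{-2p}b_n^2\le(\|F\|_p')^2$ over finite $\Lambda$ via the self-improving identity $(F,a^\Lambda)=\|a^\Lambda\|_p^2$ (which cancels one factor of $\|a^\Lambda\|_p$ against the operator-norm bound), and conclude $F=\Phi(b)$ from continuity plus density of finitely supported sequences in $\S_p$. Your route is more elementary and more concrete: it never invokes orthogonality or completeness, only the density of truncations, and so it is tailored to this weighted $\ell^2$ setting; the paper's route is shorter on the page and is the argument that generalizes verbatim to an abstract Hilbert space. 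Your closing alternative---invoking the Riesz representation theorem to get $c\in\S_p$ with $(F,a)=\langle c,a\rangle_p$ and setting $b_n=(1+n)^{2p}c_n$---is exactly the paper's proof in disguise, since the paper is in effect proving that instance of Riesz by hand. Either version of your argument is a complete and correct proof.
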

\begin{proof}
Let $\phi$  denote the map of interest $b\mapsto \langle b,\cdot\rangle$.
Note that, by Cauchy--Schwarz inequality, for $a\in\S_p$ and $b\in\S_{-p}$,
\[
 |\langle b,a\rangle|\le \|a\|_p\|b\|_{-p},
\]
and $\|\phi(b)\|_p'=\sup_{\|a\|_p\le 1}|\langle b,a\rangle|\le \|b\|_{-p}$. Thus $\phi$ maps $\S_{-p}$ into $\S_p'$. Further, for $b\in\S_{-p}$ by setting $a=(a_n)_{n\in\N^d}$ with $a_n=\|b\|_{-p}^{-1}(n+1)^{-2p}b_n$ we have $\|a\|_p=1$ and thus
\[
 \|\phi(b)\|_p'\ge |\langle b,a\rangle|=\|b\|_{-p}^{-1}\sum_{n\in\N^d}(n+1)^{-2p}b_n^2= \|b\|_{-p}.
\]
Thus, for all $b\in\S_{-p}$, $\|\phi(b)\|_p'=\|b\|_{-p}$.

Now, to see that $\phi$ is surjective, suppose $F\in\S_p'$, $F\ne 0$, let $c$ be an
element in $\S_p$ such that $\nn c_p = 1$ and $c$ is orthogonal to ${\rm Ker}(F)$. Define $b = (b_n)_{n\in\N^d}$ with 
$b_n = F(c)c_n(1+n)^{2p}$ for $n\in\N^d$. Then, it follows that $\nn b_{-p} = F(c)<\infty$, hence $b\in\S_{-p}$. Now for all $a\in\S_p$, write $a = a_1+a_2$ with $a_1 = (F(a)/F(c))c$ and $a_2 = a-a_1\in{\rm Ker}(F)$. Then $\la b,a\ra = \la b,a_1\ra = F(a)$. 
\end{proof}
In the sequel, we identify the spaces $\S_p'$ and $\S_{-p}$. We thus have $\S'=\bigcup_{p\ge 0}\S_{-p}\subset \R^{\N^d}$ and then the duality bracket $(\,\cdot,\cdot\,)$ corresponds to $\langle\,\cdot,\cdot\rangle$. 
\begin{lem}\label{lem:S'}
Let $b=(b_n)_{n\in\N^d}\in\R^{\N^d}$. Then $b\in\S'$ if and only if there exist $p\in\N$ and $c>0$ such that for all $n\in\N^d$, $|b_n|\le c(1+n)^p$. 
\end{lem}
\begin{proof}
If $b\in\S'$, there exists $p\in\N$ such that $b\in\S_{-p}$. Then, for all $n\in\N^d$, $(1+n)^{-p}|b_n|\le \|b\|_{-p}$ and the first implication, with $c=\|b\|_{-p}$, is proved.

\medskip

\noindent Conversely, if there exist $p\in\N$ and $c>0$ such that for all $n\in\N^d$, $|b_n|\le c(1+n)^p$, then
\[
 \|b\|_{-p-1}^2=\sum_{n\in\N^d}(n+1)^{-2p-2}b_n^2 \le c^2 \sum_{n\in\N^d}(n+1)^{-2}=c^2\zeta(2)<\infty.
\]
Thus $b\in\S_{p+1}'\subset \S'$.
\end{proof}

\medskip

Now, we show that $\S$ is  topologically isomorphic to $\cS$.
Recall that two topological vector spaces are said to be \textit{topologically isomorphic} if there exists a linear isomorphism from one to the other which is also a homeomorphism (a bi-continuous map). Such a map is called a \textit{topological isomorphism}.

\begin{prop}\label{prop:homeoS}
The map $\Phi: \cS\to \S$, $f\mapsto (\langle f, h_n\rangle)_{n\in\N^d}$ is a topological isomorphism. 
\end{prop}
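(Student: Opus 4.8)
The plan is to show that $\Phi:\cS\to\S$ defined by $\Phi(f)=(\langle f,h_n\rangle)_{n\in\N^d}$ is linear, bijective, and that both $\Phi$ and $\Phi^{-1}$ are continuous. Linearity is immediate from the bilinearity of the $L^2$-inner product. The key structural observation, which I would establish first, is that for every $p\in\N$ the norm $\|f\|_p$ on $\cS_p$ from~\eqref{eq:nn_p} coincides exactly with the norm $\|\Phi(f)\|_p$ on $\S_p$, since by definition both equal $\bigl(\sum_{n\in\N^d}(1+n)^{2p}\langle f,h_n\rangle^2\bigr)^{1/2}$. This isometric matching of the generating families of norms is what makes $\Phi$ a topological isomorphism almost automatically once bijectivity is in hand.

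Next I would verify that $\Phi$ maps $\cS$ into $\S$ and is injective. For $f\in\cS=\bigcap_p\cS_p$ we have $\|f\|_p<\infty$ for all $p\in\N$, hence $\|\Phi(f)\|_p=\|f\|_p<\infty$ for all $p$, so $\Phi(f)\in\bigcap_p\S_p=\S$. Injectivity follows because $\Phi(f)=0$ means $\langle f,h_n\rangle=0$ for all $n\in\N^d$; since $\{h_n\}_{n\in\N^d}$ is an orthonormal basis of $L^2(\R^d)$, this forces $f=0$ in $L^2$, and an element of $\cS$ that is zero in $L^2$ is the zero function.

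The surjectivity step is where the real work lies, and I expect it to be the main obstacle. Given $a=(a_n)_{n\in\N^d}\in\S$, I would form the candidate preimage $f=\sum_{n\in\N^d}a_n h_n$ and must show this series converges to an element of $\cS$ (not merely of $L^2$). Because $a\in\S_p$ for every $p$, the partial sums form a Cauchy sequence in each Hilbert space $\cS_p$: indeed $\sum_n a_n h_n$ has $\cS_p$-norm-squared equal to $\sum_n (1+n)^{2p}a_n^2=\|a\|_p^2<\infty$. Since each $\cS_p$ is complete (it is a Hilbert space), the series converges in $\cS_p$ for every $p$; the limits agree across different $p$ because convergence in the finer norm implies convergence in the coarser one, so $f\in\bigcap_p\cS_p=\cS$ and manifestly $\langle f,h_n\rangle=a_n$, giving $\Phi(f)=a$. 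The delicate point to articulate carefully is the interchange that lets one read off $\langle f,h_n\rangle=a_n$ from the $\cS_p$-limit, which follows from continuity of the inner product $\langle\cdot,h_n\rangle$ on $\cS_p$.

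Finally, bicontinuity is a direct consequence of the isometric identity $\|\Phi(f)\|_p=\|f\|_p$ for all $p$. A basis of neighborhoods of $0$ in $\cS$ is given by the sets $B_p(r)$ from~\eqref{eq:B_p}, and $\Phi$ carries $B_p(r)$ precisely onto $\{a\in\S\mid\|a\|_p<r\}$, a basic neighborhood of $0$ in $\S$, and vice versa via $\Phi^{-1}$. Hence $\Phi$ and $\Phi^{-1}$ are continuous, so $\Phi$ is a topological isomorphism.
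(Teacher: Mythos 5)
Your proof is correct and takes essentially the same approach as the paper's: the key isometric identity $\|\Phi(f)\|_p=\|f\|_p$, injectivity from the fact that the $\|\cdot\|_p$ are norms, surjectivity by summing the Hermite expansion of a given $a\in\S$, and bicontinuity read off directly from the isometry. The only cosmetic difference is in the surjectivity step, where you invoke completeness of each Hilbert space $\cS_p$ together with the identification $\cS=\bigcap_{p}\cS_p$, whereas the paper runs the Cauchy argument for the partial sums directly in the complete Fr\'echet space $\cS$; both variants rest on the same background facts already cited in Section~\ref{sec:spaces}.
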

\begin{proof}
For any $f\in\cS$ and $p\in\N$, 
\[
 \|\Phi(f)\|_p^2=\sum_{n\in\N^d}(1+n)^{2p}\langle f,h_n\rangle^2=\|f\|_p^2<\infty.
\]
Thus the linear map $\Phi$ is well-defined and continuous. Further, each $\|\cdot\|_p$ being a norm on $\cS$, the map $\Phi$ is injective.
Let us show that it is also surjective. Let $a=(a_n)_{n\in\N^d}\in\S$ and define $f_m=\sum_{n\in\Gamma_m}a_n h_n$, for all $m\in\N$. 
For all $p\in\N$ and $k\in\N$, 
\[
 \|f_{m+k}-f_m\|_p^2\le\sum_{n\notin\Gamma_{m}}(n+1)^{2p}a_n^2\to 0\;\text{ as }m\to\infty.
\]
Thus $(f_m)_{m\in\N}$ is a Cauchy sequence in $\cS$ and then it converges to $f\in\cS$.
By continuity of $\Phi$, we see that $\Phi(f)=a$.
The equalities $\|\Phi(f)\|_p=\|f\|_p$, $p\in\N$, prove that $\Phi$ is a homeomorphism.
\end{proof}
We can also introduce a topology on $\S'$ to make it topologically isomorphic to $(\cS',\tau_s)$, thanks to the following result.
\begin{prop}\label{prop:homeoS'}
The map $\Psi: \cS'\to \S'$, $F\mapsto ((F, h_n))_{n\in\N^d}$ is a linear isomorphism.
\end{prop}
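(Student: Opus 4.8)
The plan is to show that $\Psi$ is a linear bijection by verifying three things: that it is well-defined as a map into $\S'$, that it is injective, and that it is surjective; linearity is immediate since each bracket $F\mapsto(F,h_n)$ is linear. The single elementary computation underlying the argument is $\snn{h_n}_p=(1+n)^p$, which follows at once from the orthonormality $\la h_n,h_m\ra=\delta_{nm}$ together with the definition \eqref{eq:nn_p} of $\snn\cdot_p$. For well-definedness, given $F\in\cS'=\bigcup_{p\ge0}\cS_p'$, I would fix $p$ with $F\in\cS_p'$ and set $b=((F,h_n))_{n\in\N^d}$. Then $|b_n|=|(F,h_n)|\le\snn F_p'\,\snn{h_n}_p=\snn F_p'\,(1+n)^p$, so Lemma~\ref{lem:S'} (with $c=\snn F_p'$) gives $b\in\S'$; hence $\Psi$ indeed maps $\cS'$ into $\S'$.

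For injectivity, I would use that finite linear combinations of Hermite functions are dense in $\cS$: by the proof of Proposition~\ref{prop:homeoS}, the partial sums $\sum_{n\in\Gamma_m}\la f,h_n\ra h_n$ converge to $f$ in $\cS$ as $m\to\infty$. Consequently a continuous functional $F$ with $(F,h_n)=0$ for every $n\in\N^d$ vanishes on a dense subspace and is therefore the zero functional, so $\ker\Psi=\{0\}$.

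For surjectivity, given $b=(b_n)_{n\in\N^d}\in\S'$, i.e.\ $b\in\S_{-p}$ for some $p\in\N$, I would define $F(f)=\sum_{n\in\N^d}b_n\la f,h_n\ra=\la b,\Phi(f)\ra$ for $f\in\cS$. The Cauchy--Schwarz estimate from the proof of Lemma~\ref{lem:isom} gives $|F(f)|\le\snn b_{-p}\,\snn{\Phi(f)}_p=\snn b_{-p}\,\snn f_p$, using $\snn{\Phi(f)}_p=\snn f_p$; thus $F$ is continuous for $\snn\cdot_p$ and hence $F\in\cS'$. Finally $(F,h_m)=\sum_{n}b_n\la h_m,h_n\ra=b_m$ by orthonormality, so $\Psi(F)=b$.

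I do not expect a genuine obstacle here: the whole argument amounts to assembling the two norm estimates above with Lemma~\ref{lem:S'} and the density of Hermite expansions, all of which are already available, and it exhibits $\Psi$ as a bijection whose inverse is precisely the transpose $b\mapsto\la b,\Phi(\cdot)\ra$ of $\Phi$. The only mild point requiring care is the convergence of the series defining $F$ in the surjectivity step and its continuity for the $\cS$-topology, but the Cauchy--Schwarz bound settles both at once; since the statement asks only for a \emph{linear} isomorphism and not (yet) for continuity of $\Psi$ itself, nothing further is needed.
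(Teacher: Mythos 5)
Your proof is correct and follows essentially the same route as the paper's: well-definedness via the bound $|(F,h_n)|\le\snn F_p'(1+n)^p$ and Lemma~\ref{lem:S'}, injectivity from vanishing on the Hermite functions, and surjectivity by exhibiting the preimage $F=\la b,\Phi(\cdot)\ra$ with the Cauchy--Schwarz estimate $|F(f)|\le\snn b_{-p}\snn f_p$. If anything, your injectivity step is slightly more careful than the paper's, which invokes only ``linearity of $F$'' where one really needs, as you say, continuity of $F$ together with the density of finite Hermite expansions established in the proof of Proposition~\ref{prop:homeoS}.
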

\begin{proof}
For all $F\in\cS'$ and $n\in\N^d$, there exists $p\in\N$, such that $F\in\cS_p'$ and then, for all $n\in\N^d$, 
\[
|(F,h_n)|\le \|F\|_{p}'\|h_n\|_p=\|F\|_p'(n+1)^{p}.
\]
Thus, by Lemma~\ref{lem:S'}, the linear map $\Psi$ is well-defined.
Further, $\Psi$ is injective, since $\Psi(F)=0$ implies that $(F,h_n)=0$ for all $n\in\N^d$, and then by linearity of $F$, it implies $F(f)=0$ for all $f\in\cS$.
It remains to show that $\Psi$ is surjective. Let $b=(b_n)_{n\in\N^d}\in\S'$ and hence, $b\in\S_{-p}$ for some $p\in\N$. We define the map $F:\cS\to\R$, $f\mapsto\langle b, \Phi(f)\rangle$, where $\Phi$ is the 
topological isomorphism in Proposition~\ref{prop:homeoS}.
Then, $F$ is a continuous linear map from $\cS$ to $\R$. Indeed, the linearity is clear and the continuity comes from  $|\langle b, \Phi(f)\rangle|\le\|b\|_{-p}\|\Phi(f)\|_p= \|b\|_{-p}\|f\|_p $. Thus $F\in\cS'$. It is clear that $ \Psi(F)=b$.
\end{proof}
As a consequence, $(\cS',\tau_s)$ and $(\S',\Psi(\tau_s))$ are topologically isomorphic.
The topology $\pi_s=\Psi(\tau_s)$ on $\S'$ is called the strong topology on $\S'$. It is generated by the semi-norms 
$$
q_B(\cdot)=\sup_{a\in B}|\langle\,\cdot,a\rangle|, \quad B\text{ bounded in }\S.
$$
In the same way, $(\cS',\tau_w)$ and $(\S',\Psi(\tau_w))$ are topologically isomorphic. 
The topology $\pi_w=\Psi(\tau_w)$ on $\S'$ is called the weak topology on $\S'$. It is generated by the semi-norms
$|\langle \,\cdot,a\rangle|$, $a\in\S$.
In the sequel, if we 
do not 
precise, $\S'$ is assumed given the strong topology.

The following facts will be useful in the next section.
\begin{prop}
 $\S$, $\S'$, and hence $\cS$ and $\cS'$ are separable spaces.
\end{prop}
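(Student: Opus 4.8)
The plan is to produce one countable set that is simultaneously dense in $\S$ and in $\S'$, and then to transport separability to $\cS$ and $\cS'$ through the topological isomorphisms $\Phi$ of Proposition~\ref{prop:homeoS} and $\Psi$ of Proposition~\ref{prop:homeoS'}. Since a homeomorphism sends a countable dense set to a countable dense set, it suffices to treat the multi-sequence spaces $\S$ and $\S'$. The candidate dense set is
\[
D=\ccbb{a=(a_n)_{n\in\N^d}\in\R^{\N^d}\mid a_n\in\Q\text{ for all }n,\ a_n=0\text{ for all but finitely many }n},
\]
the finitely supported rational multi-sequences. This set is countable, and every element of $D$ lies in $\S_{-p}$ for every $p$, hence $D\subset\S\subset\S'$.

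First I would check density of $D$ in $\S$. Because the norms $\|\cdot\|_p$ increase with $p$, a basic neighborhood of $a\in\S$ may be taken as $\{b:\|a-b\|_p<\eps\}$ for a single $p\in\N$ and $\eps>0$ (pass to the largest index and smallest radius). Since $a\in\S_p$, the tail $\sum_{n\notin\Gamma_m}(1+n)^{2p}a_n^2$ vanishes as $m\to\infty$; picking $m$ with this tail below $\eps^2/4$ and then rationals $b_n$ on the finite set $\Gamma_m$ with $\sum_{n\in\Gamma_m}(1+n)^{2p}(a_n-b_n)^2<\eps^2/4$ (and $b_n=0$ off $\Gamma_m$) gives $b\in D$ with $\|a-b\|_p<\eps$.

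The main obstacle is density of $D$ in $\S'$ for the strong topology $\pi_s$, whose seminorms $q_B$ range over all bounded $B\subset\S$. A basic neighborhood of $F=(F_n)\in\S'$ is $\{G:\sup_{a\in B}|\la G-F,a\ra|<\eps\}$, finite intersections reducing to one bounded set by taking unions. Fix $p$ with $F\in\S_{-p}$ and recall that $B$ bounded in $\S$ means $C_q:=\sup_{a\in B}\|a\|_q<\infty$ for all $q\in\N$. For $b\in D$ supported on $\Gamma_m$ I would split $\la F-b,a\ra=\sum_{n\in\Gamma_m}(F_n-b_n)a_n+\sum_{n\notin\Gamma_m}F_na_n$ and bound each piece by Cauchy--Schwarz applied at level $p+1$: the tail is at most $\big(\sum_{n\notin\Gamma_m}(1+n)^{-2(p+1)}F_n^2\big)^{1/2}C_{p+1}$, uniform in $a\in B$ and tending to $0$ as $m\to\infty$ because $F\in\S_{-(p+1)}$, while the finite part is at most $\big(\sum_{n\in\Gamma_m}(1+n)^{-2(p+1)}(F_n-b_n)^2\big)^{1/2}C_{p+1}$, which rational approximation on the finite set $\Gamma_m$ makes arbitrarily small. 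The crux is exactly this coupling: one uses the index $-(p+1)$ to obtain a summable, hence vanishing, tail for $F$, while simultaneously absorbing the supremum over $B$ through its boundedness in $\|\cdot\|_{p+1}$, the two being tied together by Cauchy--Schwarz. Choosing $m$ and then the $b_n$ so that each piece is below $\eps/2$ yields $\sup_{a\in B}|\la F-b,a\ra|<\eps$, so $D$ is dense in $\S'$.

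Finally I would conclude that $\Phi^{-1}(D)$ and $\Psi^{-1}(D)$ are countable dense subsets of $\cS$ and of $(\cS',\tau_s)$, so these spaces are separable; and since $\tau_w$ is coarser than $\tau_s$, the same set remains dense for the weak topology, giving separability of $\cS'$ in either topology. Hence $\S$, $\S'$, $\cS$, and $\cS'$ are all separable.
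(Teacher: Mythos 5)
Your proof is correct and takes essentially the same approach as the paper: the paper also uses the finitely supported rational multi-sequences as the countable dense set, observing that they are dense in every $\S_p$, $p\in\Z$, and hence in both $\S$ and $\S'$. Your explicit Cauchy--Schwarz estimate for density in the strong topology is just an unfolding of what the paper gets from the continuity of the injections $\S_p'\to\S'$ (Lemma~\ref{lem:cont}).
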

\begin{proof}
The set of elements of $\R^{\N^d}$ with rational coordinates and only finitely many non-zero ones is dense in $\S_p$ for all $p\in\Z$. It is then dense in both $\S$ and $\S'$.
\end{proof}

\begin{lem}\label{lem:cont}
 For each $p\in\N$, the injection map 
 $i_p:\S_p'\to\S'$ is continuous with respect to the strong topology on $\S'$ (and hence also with respect to the weak topology). 
\end{lem}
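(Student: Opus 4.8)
The plan is to prove continuity of the injection $i_p:\S_p'\to\S'$ by checking that the preimage under $i_p$ of each basic semi-norm on $\S'$ is bounded by a multiple of the dual norm $\|\cdot\|_p'$ on $\S_p'$. Since $\S'$ carries the strong topology generated by the semi-norms $q_B(\cdot)=\sup_{a\in B}|\langle\,\cdot,a\rangle|$ for $B$ bounded in $\S$, and since $i_p$ is linear, it suffices to show that for each bounded $B\subset\S$ the semi-norm $q_B\circ i_p$ is continuous on $\S_p'$, i.e.\ that $\sup_{a\in B}|\langle F,a\rangle|\le C_B\,\|F\|_p'$ for all $F\in\S_p'$, with a constant $C_B$ depending only on $B$.

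The key observation is that a bounded set $B$ in $\S$ is bounded with respect to every norm $\|\cdot\|_q$, $q\in\N$; in particular it is bounded with respect to $\|\cdot\|_p$. Hence there exists $M_B=\sup_{a\in B}\|a\|_p<\infty$. First I would record this fact, using that the topology of $\S$ is generated by the family $\{\|\cdot\|_q\}_{q\in\N}$ (equivalently $q\in\Z$), so that the ball $\{a:\|a\|_p<r\}$ is a neighborhood of $0$ and boundedness of $B$ forces $B\subset \lambda\{a:\|a\|_p<r\}$ for some $\lambda>0$, giving the uniform bound $M_B$. Then, for any $F\in\S_p'$ and any $a\in B$, the very definition of the dual norm gives
\[
|\langle F,a\rangle| = |(F,a)| \le \|F\|_p'\,\|a\|_p \le M_B\,\|F\|_p'.
\]
Taking the supremum over $a\in B$ yields $q_B(i_p(F))\le M_B\,\|F\|_p'$, which is exactly the required estimate with $C_B=M_B$.

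This shows each generating semi-norm of the strong topology, pulled back through $i_p$, is dominated by the norm of $\S_p'$, so $i_p$ is continuous into $(\S',\pi_s)$. Since every weak semi-norm $|\langle\,\cdot,a\rangle|$ (for fixed $a\in\S$) is a special case of $q_B$ with $B=\{a\}$, the same estimate with $M_B=\|a\|_p$ gives continuity into $(\S',\pi_w)$ as well; alternatively one invokes that the weak topology is coarser than the strong one, so continuity for $\pi_s$ automatically gives continuity for $\pi_w$.

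The only point requiring any care—and the step I expect to be the main obstacle—is the passage from ``$B$ bounded in $\S$'' to ``$\sup_{a\in B}\|a\|_p<\infty$.'' This is where the definition of boundedness in a locally convex space (given earlier in the excerpt) must be combined with the description of the topology of $\S$ by the norms $\|\cdot\|_p$; everything after that is the one-line Cauchy--Schwarz/dual-norm estimate above. I would therefore devote one sentence to justifying the uniform norm bound and keep the rest brief.
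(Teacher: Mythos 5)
Your proof is correct and takes essentially the same route as the paper: both arguments come down to the single estimate $q_B(b)\le \|b\|_p'\,\sup_{a\in B}\|a\|_p$ for $B$ bounded in $\S$, the paper phrasing it as an $\eps$-argument at an arbitrary point $b_0\in i_p^{-1}(V)$, while you use linearity of $i_p$ to reduce to semi-norm domination at the origin. If anything, your write-up is slightly more complete, since you explicitly justify $\sup_{a\in B}\|a\|_p<\infty$ from the definition of a bounded set, a fact the paper's proof uses implicitly.
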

\begin{proof}
Let $\S'$ be given the strong topology. Let $V=\{b\in\S'\mid q_B(b)<\eps\}$ for some  bounded set $B\subset\S$ and some $\eps>0$.
For each $b_0\in i_p^{-1}(V)$, we have $q_B(b_0)<\eps$ and for all $b\in\S_p'$,
\begin{align*}
q_B(b)
&\le \sup_{a\in B}|\langle b-b_0,a\rangle| + \sup_{a\in B}|\langle b_0,b\rangle|\\
&\le \|b-b_0\|_p' \sup_{a\in B}\|a\|_p + q_B(b_0).
\end{align*}
Thus, if $\|b-b_0\|_p'< \frac{\eps-q_B(b_0)}{\sup_{a\in B}\|a\|_p +1}$, then $b\in i_p^{-1}(V)$. 
\end{proof}

\subsection{Cylinder $\sigma$-fields and Borel $\sigma$-fields}\label{sec:sigmafields}

On the space $\R^{\N^d}$, 
the product $\sigma$-field generated by the sets
$
 \{a\in\R^{\N^d}\mid (a_i)_{i\in \Gamma}
 \subset
  B\}
$
for all finite subsets $\Gamma$ of $\N^d$ and all Borel sets $B\in\cB(\R^{\#\Gamma})$
is the same as the Borel $\sigma$-field associated to the product topology on $\R^{\N^d}$ (see \citep[Lemma 1.2]{Kal97}). It is denoted by $\cB(\R^{\N^d})$.

On $\S$, the cylinder $\sigma$-field, denoted by $\cC(\S)$, is generated by the cylinder sets
\[
\{a\in\S\mid(\langle b_1,a\rangle,\ldots,\langle b_m,a\rangle )\in B\},
\]
for all $m\ge1$, $b_1,\dots,b_m\in\S'$, $B\in\cB(\R^m)$. We easily see that $\cC(\S)$ is the $\sigma$-field induced by $\cB(\R^{\N^d})$ on $\S$.

On $\S'$, the cylinder $\sigma$-field, denoted by $\cC(\S')$, is generated by the cylinder sets
\[
\ccbb{b\in\S'\mid (\langle b,a_1\rangle,\dots,\langle b,a_m\rangle) \in B},
\]
for all $m\ge1$, $a_1,\dots,a_m\in\S$, $B\in\cB(\R^m)$. Again, $\cC(\S')$ is the $\sigma$-field induced by $\cB(\R^{\N^d})$ on $\S'$.
Further, by definition, we see that $\cC(\S')$ is also the Borel $\sigma$-field corresponding to the weak topology $\pi_w$ on $\S'$.

Recall that, in our setting, $\S$ is given the topology generated by the family of norms $\|\cdot\|_p$, $p\in\N$, and $\S'$ is given the strong topology $\pi_s$.
The corresponding Borel $\sigma$-fields are denoted by $\cB(\S)$ and $\cB(\S')$, respectively. Of course, we have
\[
\Phi^{-1}(\cB(\S))=\cB(\cS)\;\text{ and }\;
\Psi^{-1}(\cB(\S'))=\cB(\cS')
\]
for the topological isomorphisms $\Phi$ and $\Psi$ from Propositions~\ref{prop:homeoS} and \ref{prop:homeoS'}, and where $\cB(\cS)$ and $\cB(\cS')$ are respectively the Borel $\sigma$-fields of $\cS$ and $\cS'$ given the strong topology $\tau_s$.

\begin{prop}
$\cC(\S)=\cB(\S)$.
\end{prop}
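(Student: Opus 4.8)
The plan is to establish the two inclusions $\cC(\S)\subseteq\cB(\S)$ and $\cB(\S)\subseteq\cC(\S)$ separately. Throughout I would use the fact recorded just above the statement that $\cC(\S)$ coincides with the $\sigma$-field induced on $\S$ by $\cB(\R^{\N^d})$; equivalently, $\cC(\S)$ is generated by the coordinate projections $a\mapsto a_n$, $n\in\N^d$. Each such projection is of the form $\langle b,\cdot\rangle$ for a finitely supported sequence $b$, which lies in every $\S_{-p}$ and hence in $\S'$, so these projections are indeed among the generators of $\cC(\S)$.

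For the inclusion $\cC(\S)\subseteq\cB(\S)$ I would argue that the generating cylinder sets are already Borel. Indeed, for $b\in\S'$ there is $p\in\N$ with $b\in\S_{-p}$, and then $|\langle b,a\rangle|\le\|b\|_{-p}\|a\|_p$ by Cauchy--Schwarz, so $a\mapsto\langle b,a\rangle$ is continuous for the topology of $\S$ generated by the norms $\|\cdot\|_p$. Hence each map $a\mapsto(\langle b_1,a\rangle,\dots,\langle b_m,a\rangle)$ is continuous into $\R^m$, and the preimage of any $B\in\cB(\R^m)$ under it is a Borel subset of $\S$. Since such preimages generate $\cC(\S)$, the first inclusion follows.

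The substantive inclusion is $\cB(\S)\subseteq\cC(\S)$, and this is the main obstacle. Here I would use that $\S$ is separable (as established earlier) and metrizable, hence second countable; fixing a countable dense set $D\subseteq\S$ and using that the norms increase in $p$ (so that a single ball $\{a:\|a-a_0\|_p<r\}$ is contained in any finite intersection of balls at $a_0$ with larger radii and smaller indices), the balls $\{a:\|a-a_0\|_p<r\}$ with $a_0\in D$, $p\in\N$, $r\in\Q_{>0}$ form a countable base, so every open set is a countable union of them. It therefore suffices to show each such ball belongs to $\cC(\S)$, and for this it is enough that $a\mapsto\|a-a_0\|_p$ be $\cC(\S)$-measurable. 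This is where the coordinate description of $\cC(\S)$ enters: each partial sum $\sum_{n\in\Gamma_m}(1+n)^{2p}(a_n-a_{0,n})^2$ depends on finitely many coordinates and is thus $\cC(\S)$-measurable, and $\|a-a_0\|_p^2$ is their increasing pointwise limit as $m\to\infty$, hence also $\cC(\S)$-measurable. Consequently every ball, and then every open set, lies in $\cC(\S)$, completing the proof. The only points requiring care are the reduction of open sets to countably many balls via separability and the recognition of the norms as pointwise limits of finite-coordinate functions; once these are in place the argument is routine.
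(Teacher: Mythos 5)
Your proof is correct, and it follows the paper's skeleton at the level of structure, but the key measurability step is genuinely different. Like the paper, you obtain $\cC(\S)\subseteq\cB(\S)$ from continuity of $a\mapsto\la b,a\ra$ (Cauchy--Schwarz against $\nn{\cdot}_p$ for $b\in\S_{-p}$), and you reduce $\cB(\S)\subseteq\cC(\S)$, via separability and metrizability, to showing that countably many norm balls form a base of the topology and lie in $\cC(\S)$. For that last point the paper dualizes the norm: by Lemma~\ref{lem:isom}, $\nn{a}_p=\sup_{\nn{b}_{-p}\le1}|\la b,a\ra|$, and replacing the dual unit ball by a countable dense subset $D_{-p}$ gives $B_p(1/k)=\bigcup_{n\ge1}\bigcap_{b\in D_{-p}}\ccbb{a\,:\,|\la b,a\ra|<1/k-1/n}$, a countable combination of cylinder sets. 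You instead write $\nn{a-a_0}_p^2$ as the increasing pointwise limit of the partial sums $\sum_{n\in\Gamma_m}(1+n)^{2p}(a_n-a_{0,n})^2$, each a continuous function of finitely many coordinates (and the coordinate maps $a\mapsto a_n$ are cylinder maps given by finitely supported $b\in\S'$), so measurability of monotone limits finishes the job. Your route is more elementary and self-contained: it avoids the isometry $\S_p'\cong\S_{-p}$ and the need for a countable dense subset of the dual unit ball, relying only on the identification of $\cC(\S)$ with the trace of the product $\sigma$-field recorded just before the statement, and it handles balls centered at arbitrary points of $D$ directly rather than translating balls at the origin. What the paper's duality device buys is that it is the same trick reused verbatim for the dual statement $\cC(\S')=\cB(\S')$ (Proposition~\ref{prop:sigma-fieldS'}) and applies to any seminorm of sup type, whereas your argument exploits the explicit coordinate form of the Hilbertian norms $\nn{\cdot}_p$ (though it would in fact also work for the ball step there). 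The two points you flag as needing care --- the countable base via separability and the monotone-limit measurability --- are indeed the only things to check, and both go through.
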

\begin{proof}
Recall that a countable basis of neighborhoods of $0\in\S$ is 
given by
 the sets $B_p(1/k)$, $p\in\N$, $k\ge1$ (see \eqref{eq:B_p}).
Note that, by Lemma~\ref{lem:isom}, $B_p(1/k)$ is an open set that can be written as 
\begin{align*}
B_p(1/k)&=\left\{a\in\S \mid \sup_{\|b\|_{-p}\le 1}|\langle b,a\rangle|<1/k\right\}\\
&=\bigcup_{n\ge1}\bigcap_{b\in D_{-p}}\{a\in \S\mid|\langle b,a\rangle|< 1/k-1/n\},
\end{align*}
where $D_{-p}$ is a countable dense subset in the unit ball of $\S_{-p}$.
So, each $B_p(1/k)$ belongs to $\cC(\S)$.
Now, since $\S$ is separable, there exists a countable dense subset $D\subset \S$, and
for any open set $U$ in $\S$, we have
\[
 U=\bigcup_{\substack{a\in D,\, p\in\N,\, k\ge1\\ \text{ s.t. } a+B_p(1/k)\subset U}}(a+B_p(1/k))
\]
which is a countable union of elements of $\cC(\S)$, and thus $\cB(\S)\subset \cC(\S)$.

\medskip

\noindent The converse inclusion comes from the fact that the maps 
$f_b:a\in \S \mapsto\langle b,a \rangle$, $b\in\S'$, are continuous 
on $\S$ and thus $\cB(\S)$-measurable.
\end{proof}

\begin{prop}\label{prop:sigma-fieldS'}
$\cC(\S')=\cB(\S')$.
\end{prop}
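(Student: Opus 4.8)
The plan is to prove the two inclusions separately. The inclusion $\cC(\S')\subset\cB(\S')$ I would get immediately: the weak topology $\pi_w$ is generated by the seminorms $|\la\,\cdot,a\ra|$ with $a\in\S$, and each of these is one of the strong seminorms $q_{\{a\}}$ (singletons being bounded), so $\pi_w\subset\pi_s$. Since $\cC(\S')$ is the Borel $\sigma$-field of $\pi_w$ and a coarser topology yields a smaller Borel $\sigma$-field, this gives $\cC(\S')\subset\cB(\S')$.

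The real work is the reverse inclusion $\cB(\S')\subset\cC(\S')$, i.e.\ showing that every $\pi_s$-open set lies in $\cC(\S')$. First I would record that for each $n\in\N^d$ the coordinate map $F\mapsto F_n$ is $\cC(\S')$-measurable, since $F_n=\la F,\delta_n\ra$ with $\delta_n\in\S$ the multi-sequence having a single nonzero entry. Consequently, for every $p\in\N$ the map $N_p(F)=\pp{\sum_{n\in\N^d}(1+n)^{-2p}F_n^2}^{1/2}\in[0,\infty]$ is $\cC(\S')$-measurable, being a countable sum of measurable functions. Hence for any fixed $F_0\in\S_{-p}$ and $r>0$ the set $\{F\in\S'\mid N_p(F-F_0)<r\}$ belongs to $\cC(\S')$; and since $N_p(F-F_0)=\infty$ whenever $F\notin\S_{-p}$, this set is exactly the open $\|\cdot\|_{-p}$-ball of $\S_{-p}$ centered at $F_0$.

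The key idea to finish is to exploit the decomposition $\S'=\bigcup_{p\ge0}\S_{-p}$ rather than to work with $\pi_s$ directly. Let $U$ be $\pi_s$-open. By Lemma~\ref{lem:cont} the injection $i_p\colon\S_{-p}\to\S'$ is continuous, so $U\cap\S_{-p}=i_p^{-1}(U)$ is open in the Hilbert space $(\S_{-p},\|\cdot\|_{-p})$. This space is separable, hence second countable, so $U\cap\S_{-p}$ is a countable union of open $\|\cdot\|_{-p}$-balls with centers in a countable dense subset and rational radii; by the previous step each such ball is in $\cC(\S')$, whence $U\cap\S_{-p}\in\cC(\S')$. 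Finally $U=\bigcup_{p\ge0}(U\cap\S_{-p})$ is a countable union, so $U\in\cC(\S')$, and therefore $\cB(\S')\subset\cC(\S')$.

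The step I expect to be the main obstacle is precisely this reverse inclusion, and the subtlety to be careful about is that $(\S',\pi_s)$ is not metrizable (because $\S$ is not normable, its bounded sets having no countable fundamental system), so one cannot simply write an arbitrary strongly-open set as a countable union of basic neighborhoods the way one would in a separable metric space. The device that circumvents this is the reduction to the separable Hilbert spaces $\S_{-p}$ through the continuity of the inclusions $i_p$: second countability is available on each $\S_{-p}$, and the countable union over $p$ reassembles $U$ inside $\cC(\S')$.
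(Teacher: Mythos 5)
Your proof is correct and follows essentially the same route as the paper's: both reduce to the decomposition $\S'=\bigcup_{p\ge0}\S_{-p}$, use Lemma~\ref{lem:cont} to see that $U\cap\S_{-p}$ is open in the separable Hilbert space $\S_{-p}$, and conclude by second countability of each $\S_{-p}$ together with a countable union over $p$. The only (harmless) difference is in how the balls are shown to lie in $\cC(\S')$: the paper writes $\|\cdot\|_{-p}$ as a supremum of $|\la\,\cdot,a\ra|$ over a countable dense subset of the unit ball of $\S_p$, whereas you expand the squared norm coordinatewise as a countable sum of $\cC(\S')$-measurable functions.
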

\begin{proof}
Since for all $a\in\S$, the map 
$g_a:b\in \S' \mapsto\langle b,a \rangle$ is continuous 
on $\S'$, the inclusion $\cC(\S')\subset\cB(\S')$ is clear.

\medskip

\noindent For the converse, let $V$ be an open set in $\S'$ with respect to $\pi_s$. By Lemma~\ref{lem:cont}, for each $p\in\N$, $V_p=V\cap \S_{p}'$ is open in $\S_{p}'$, and we can write
$V=\bigcup_{p\in\N}V_{p}$.  To conclude we  prove that open sets in $\S_p'$ (with respect to the topology generated by $\nn\cdot_p'$) belong to  $\cC(\S)$.
Indeed,  for each $p\in\N$ and $r>0$, consider
\[B_{-p}(r)=\{b\in\S' \mid \| b\|_{-p}<r\}=\bigcup_{n\ge1}\bigcap_{a\in D_{p}}\{b\in\S' \mid |\langle b,a\rangle| < r - 1/n\}
\]
where $D_p$ a countable dense subset of the unit ball of $\S_p$, and we see that $B_{-p}(r)\in\cC(\S')$.
A countable basis of the topology of $\S_{p}'$ is given by the sets $b+B_{-p}(1/k)$, for all $k\ge1$
 and $b$ in a given countable dense subset of $\S_{p}'$.
We infer that $V_p\in\cC(\S)$.
\end{proof}

Further, since the cylinder $\sigma$-field on $\S'$ corresponds to the Borel $\sigma$-field of the weak topology, we have the following corollary.
\begin{coro}\label{cor:sigmafields}
On $\S'$ (and $\cS'$), the Borel $\sigma$-field of the strong topology and the Borel $\sigma$-field of the weak topology coincide. 
\end{coro}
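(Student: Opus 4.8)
The plan is to obtain the corollary on $\S'$ by a short squeeze between the cylinder $\sigma$-field and the strong Borel $\sigma$-field, and then to transport the result to $\cS'$ through the isomorphism $\Psi$. First I would record two inclusions. On the one hand, the weak topology $\pi_w$ is generated by the semi-norms $|\langle\cdot,a\rangle|=q_{\{a\}}$, $a\in\S$, which are particular cases of the strong semi-norms $q_B$ (take $B=\{a\}$, a bounded subset of $\S$); hence $\pi_w\subseteq\pi_s$ and therefore $\cB(\S',\pi_w)\subseteq\cB(\S',\pi_s)=\cB(\S')$. On the other hand, each evaluation map $g_a:b\mapsto\langle b,a\rangle$ is $\pi_w$-continuous by the very definition of $\pi_w$, hence $\cB(\S',\pi_w)$-measurable; since $\cC(\S')$ is by construction the $\sigma$-field generated by the family $\{g_a\}_{a\in\S}$, this gives $\cC(\S')\subseteq\cB(\S',\pi_w)$. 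Combining these with Proposition~\ref{prop:sigma-fieldS'}, which identifies $\cC(\S')=\cB(\S')$, yields the chain $\cB(\S')=\cC(\S')\subseteq\cB(\S',\pi_w)\subseteq\cB(\S')$. All inclusions are thus equalities, so the weak and strong Borel $\sigma$-fields on $\S'$ coincide. Equivalently, one may invoke directly the identification $\cB(\S',\pi_w)=\cC(\S')$ noted after the definition of the cylinder sets, together with Proposition~\ref{prop:sigma-fieldS'}.

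It then remains to pass from $\S'$ to $\cS'$. Here I would use that $\Psi$ from Proposition~\ref{prop:homeoS'} is a linear isomorphism which is simultaneously a homeomorphism $(\cS',\tau_s)\to(\S',\pi_s)$ and $(\cS',\tau_w)\to(\S',\pi_w)$, since by definition $\pi_s=\Psi(\tau_s)$ and $\pi_w=\Psi(\tau_w)$. A homeomorphism carries the Borel $\sigma$-field of the source onto that of the target, so $\Psi^{-1}(\cB(\S',\pi_s))=\cB(\cS',\tau_s)$ and $\Psi^{-1}(\cB(\S',\pi_w))=\cB(\cS',\tau_w)$. Applying $\Psi^{-1}$ to the equality just established on $\S'$ gives $\cB(\cS',\tau_s)=\cB(\cS',\tau_w)$, which is the assertion for $\cS'$.

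I do not expect a genuine obstacle: the substance has already been absorbed into Proposition~\ref{prop:sigma-fieldS'}, whose nontrivial content is the inclusion $\cB(\S')\subseteq\cC(\S')$ obtained via Lemma~\ref{lem:cont} and the separability of $\S$. The only points needing a line of care are the easy inclusion $\cC(\S')\subseteq\cB(\S',\pi_w)$ and the comparison $\pi_w\subseteq\pi_s$, both immediate from the descriptions of the two topologies. Everything after that is formal transport along $\Psi$.
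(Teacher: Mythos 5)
Your proof is correct and follows essentially the same route as the paper: the whole content is Proposition~\ref{prop:sigma-fieldS'} ($\cC(\S')=\cB(\S')$), combined with the relation between the cylinder $\sigma$-field and the weak topology, then formal transport to $\cS'$ via $\Psi$. The one point where you improve on the paper's wording: the paper simply asserts ``by definition'' the full identity $\cC(\S')=\cB(\S',\pi_w)$, but the inclusion $\cB(\S',\pi_w)\subseteq\cC(\S')$ is not purely definitional (an arbitrary weakly open set is an arbitrary, not a priori countable, union of basic open sets, so some separability argument is needed). Your squeeze $\cB(\S')=\cC(\S')\subseteq\cB(\S',\pi_w)\subseteq\cB(\S',\pi_s)=\cB(\S')$ sidesteps this entirely, using only the trivial inclusion $\cC(\S')\subseteq\cB(\S',\pi_w)$ and the comparison $\pi_w\subseteq\pi_s$, which is a cleaner way to close the argument.
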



\section{Minlos--Bochner's theorems}\label{sec:Milnos}

In this section, we study the existence of generalized random fields. 
Thanks to Proposition~\ref{prop:homeoS'}, in order to define a random variable with value in  $(\cS',\cB(\cS'))$, we can define a random variable $X$  with values in $(\S',\cB(\S'))$ and then consider $\Psi(X)$. 
Conversely, for any generalized random field $Y$ with values in $(\cS',\cB(\cS'))$, the variable $X=\Psi^{-1}(Y)$ is a random variable with values in $(\S',\cB(\S'))$. By extension, we also call $X$ a generalized random field and we have
\begin{itemize}
\item for all $\omega \in \Omega$, $X(\omega)\in \S'$;
\item for all $a\in\S$, $ \la X,a\ra:\omega \mapsto \la X(\omega),a\ra $ is a real random variable.
\end{itemize}
We see that a generalized random field $X$ provides a collection $( \la X,a\ra)_{a\in \S}$ of real random variables. 
The finite-dimensional distributions of the generalized random field $X$ on $(\Omega,{\mathcal A},\mathbb{P})$ are given by
the law of the real vectors $(\la X, a_1\ra,\ldots,\la X, a_m\ra)$ for all $m\ge 1$, $a_1,\ldots,a_m\in \cS$. By linearity, the finite-dimensional distributions are determined by the characteristic functional of $X$ defined by 
$$
{\mathcal L}_X(a)=\E(e^{i\la X,a\ra})=\int_\Omega e^{i\la X,a\ra}d\P,\quad a\in \S.
$$
More generally, for a probability measure $\mu$ on $(\R^{\N^d},\cB(\R^{\N^d}))$, we call the characteristic functional of $\mu$ the functional
$$
{\mathcal L}_\mu(a)=\int_{\R^{\N^d}} e^{i\la b,a\ra}d\mu(b),
$$
defined for multi-sequences $a$ such that $\la b,a\ra$ is finite for $\mu$-almost all $b\in\R^{\N^d}$. 
It is always defined for multi-sequences $a\in\R^{\N^d}$ with finitely many non-zero coordinates. In particular, for a generalized random field $X$ with law $\mu=\P\circ X\inv$, we have $\cL_X = \cL_\mu$.

\begin{thm}\label{thm:MinlosS'}
If a functional $\cL:\S\to \C$ is positive-definite, continuous at $0$, and $\cL(0)=1$, then it is
the characteristic functional of a generalized random field on $\S'$.
\end{thm}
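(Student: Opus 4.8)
The plan is to realize $\cL$ as a characteristic functional by first building a measure on the ambient space $\R^{\N^d}$ from its finite‑dimensional marginals, and then --- this is the crux --- using the continuity of $\cL$ at $0$ to force that measure to live on $\S'$. At the outset I would record one elementary consequence of positive‑definiteness, to be used at the very end: the standard Gram‑matrix inequality $|\cL(a)-\cL(b)|^2\le 2\spp{1-\Re\cL(a-b)}$ shows that a positive‑definite $\cL$ with $\cL(0)=1$ which is continuous at $0$ is in fact uniformly continuous on $\S$.

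First I would construct the cylinder measure. For each finite $\Gamma\subset\N^d$, the restriction of $\cL$ to the multi‑sequences supported on $\Gamma$ is a continuous positive‑definite function on $\R^{\Gamma}$ equal to $1$ at the origin, so the classical Bochner theorem gives a probability measure $\mu_\Gamma$ on $\R^{\Gamma}$ with this characteristic function. Uniqueness of characteristic functions makes $(\mu_\Gamma)_\Gamma$ a consistent family, so Kolmogorov's extension theorem yields a probability measure $\mu$ on $\spp{\R^{\N^d},\cB(\R^{\N^d})}$ with these marginals; writing $b$ for the generic point, this means $\int e^{i\la b,a\ra}\,d\mu(b)=\cL(a)$ for every $a$ with finite support.

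The main step is to prove $\mu(\S')=1$. Fix $\eps>0$. By continuity at $0$ choose $p\in\N$ and $\delta>0$ with $1-\Re\cL(a)\le\eps$ whenever $\snn a_p\le\delta$, and put $q=p+1$, so that $\sum_{n}(1+n)^{2(p-q)}=\zeta(2)^d=:S<\infty$ --- this is the one‑extra‑power Hilbert--Schmidt gain. On $\R^{\Gamma}$ introduce an independent centered Gaussian vector $\tau=(\tau_n)_{n\in\Gamma}$ with $\E(\tau_n^2)=\sigma^2(1+n)^{-2q}$. Averaging the identity $\int e^{i\la b,\tau\ra}\,d\mu(b)=\cL(\tau)$ over the law of $\tau$ and applying Fubini gives
\[
\int_{\R^{\N^d}}\exp\!\Big(-\frac{\sigma^2}{2}\sum_{n\in\Gamma}(1+n)^{-2q}b_n^2\Big)\,d\mu(b)=\E_\tau\spp{\Re\cL(\tau)},
\]
the left side being real because $\tau$ is symmetric and $\cL(-a)=\overline{\cL(a)}$. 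Splitting the right side over $\{\snn\tau_p\le\delta\}$ and its complement, using $1-\Re\cL\le2$ there together with the Markov bound $\P(\snn\tau_p>\delta)\le\E\snn\tau_p^2/\delta^2\le\sigma^2S/\delta^2$, and finally taking $\sigma^2=\eps\delta^2/(2S)$, yields $\E_\tau\spp{\Re\cL(\tau)}\ge1-2\eps$. Letting $\Gamma=\Gamma_m\uparrow\N^d$ and using monotone convergence gives $\int\exp\spp{-\tfrac{\sigma^2}{2}\snn b_{-q}^2}\,d\mu(b)\ge1-2\eps$, where $\snn b_{-q}^2=\sum_n(1+n)^{-2q}b_n^2\in[0,\infty]$ and the integrand vanishes when $\snn b_{-q}=\infty$. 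Hence $\mu\spp{\snn b_{-q}=\infty}\le2\eps$, and since $\{\snn b_{-q}<\infty\}=\S_{-q}\subset\S'$ we get $\mu\spp{\S'^{\,c}}\le2\eps$; as $\eps$ is arbitrary, $\mu(\S')=1$.

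Finally I would conclude. Each $\S_{-q}=\{\snn b_{-q}<\infty\}$, and hence $\S'=\bigcup_q\S_{-q}$, is $\cB(\R^{\N^d})$‑measurable, so the restriction of $\mu$ to $\spp{\S',\cB(\S')}$ --- recall $\cB(\S')=\cC(\S')$ is the trace $\sigma$‑field by Proposition~\ref{prop:sigma-fieldS'} --- is a probability measure of total mass $1$. Taking $(\Omega,\cA,\P)=\spp{\S',\cB(\S'),\mu}$ and $X$ the canonical (identity) map gives a generalized random field whose characteristic functional equals $\cL$ on finitely supported $a$; for general $a\in\S$ one truncates $a$ to $a^{(m)}$ on $\Gamma_m$, notes $\la b,a^{(m)}\ra\to\la b,a\ra$ for $b\in\S'$ by Cauchy--Schwarz on $\S_{-q}\times\S_q$, and lets dominated convergence together with the uniform continuity of $\cL$ give $\cL_X(a)=\limn\cL(a^{(m)})=\cL(a)$. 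The main obstacle is precisely the concentration step $\mu(\S')=1$: everything rests on the Gaussian‑averaging trick, which converts smallness of $1-\Re\cL$ near $0$ into a lower bound on $\int\exp\spp{-\tfrac{\sigma^2}{2}\snn b_{-q}^2}\,d\mu$, and on the summability $\zeta(2)^d<\infty$ gained by moving from $\snn\cdot_p$ to $\snn\cdot_{-(p+1)}$.
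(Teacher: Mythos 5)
Your proposal is correct and follows essentially the same route as the paper: Bochner's theorem plus Kolmogorov extension to build $\mu$ on $\R^{\N^d}$, then the Gaussian-averaging argument with the one-extra-power gain $q=p+1$ and $\zeta(2)^d<\infty$ (the paper's Lemma~\ref{lem:bound}) to get $\mu(\S_{-p-1})\ge 1-2\eps$ and hence $\mu(\S')=1$. Your split of the Gaussian expectation via Markov's inequality is just an inlined variant of the paper's global quadratic bound $1-\Re\cL_\mu(a)\le\eps+2\delta^{-2}\|a\|_p^2$, and your explicit appeal to the uniform continuity of positive-definite functionals when extending $\cL_\mu=\cL$ from finitely supported sequences to all of $\S$ is a careful rendering of a step the paper passes over quickly.
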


Theorem \ref{thm:MinlosGRF} and Corollary~\ref{cor:MinlosGRF} are direct consequences.
To prove the present theorem, we need the following key lemma.

\begin{lem}\label{lem:bound}
Let $\mu$ be a Borel probability measure on $\R^{\N^d}$, $p\in\Z$, $c>0$, and $\eps>0$ such that
for all $a\in\R^{\N^d}$ with finitely many non-zero coordinates, 
\[
 1-\Re \cL_\mu(a)\le \eps+ c\|a\|_p^2.
\]
Then for all $q>p$, and $\sigma>0$,
\[
 \int_{\R^{\N^d}}1- \exp\left(-\frac{\sigma^2}{2}\|b\|_{-q}^2\right)d\mu(b)\le \eps + c\sigma^2\zeta(2(q-p))^d.
\]
In particular,
\[
\mu(\S_{-q}) \ge 1-\varepsilon.
\]
\end{lem}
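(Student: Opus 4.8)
The plan is to prove the estimate by an auxiliary Gaussian averaging device, exploiting the elementary identity $\E[\cos(tG)]=e^{-t^2/2}$ for a standard Gaussian $G$ and $t\in\R$. This identity is precisely what converts the quadratic form $\|b\|_{-q}^2$ appearing on the left-hand side of the conclusion into an average of $\Re\cL_\mu$ evaluated at suitable (random, finitely supported) test sequences, to which the hypothesis can be applied.

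Concretely, first I would fix a family $(G_n)_{n\in\N^d}$ of independent standard Gaussian random variables on some auxiliary probability space, with expectation $\E$. For a finite index set $\Gamma\subset\N^d$ and $\sigma>0$, I introduce the random multi-sequence $a^\Gamma$ defined by $a^\Gamma_n=\sigma(1+n)^{-q}G_n\ind_{n\in\Gamma}$. Since $a^\Gamma$ has only finitely many non-zero coordinates, the hypothesis applies pathwise, giving
\[
1-\Re\cL_\mu(a^\Gamma)\le \eps+c\|a^\Gamma\|_p^2 .
\]
Taking $\E$ of the right-hand side and using $\E[G_n^2]=1$ yields, since $q-p\ge1$ so that the multi-index sum factorizes,
\[
\E\|a^\Gamma\|_p^2=\sigma^2\sum_{n\in\Gamma}(1+n)^{-2(q-p)}\le \sigma^2\,\zeta(2(q-p))^d .
\]

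For the left-hand side, I would write $\Re\cL_\mu(a^\Gamma)=\int_{\R^{\N^d}}\cos\la b,a^\Gamma\ra\,d\mu(b)$ and exchange $\E$ with $\int\,d\mu$, which is legitimate by Fubini since $|\cos|\le1$. For fixed $b$, the quantity $\la b,a^\Gamma\ra=\sigma\sum_{n\in\Gamma}(1+n)^{-q}b_nG_n$ is a centered Gaussian of variance $\sigma^2\sum_{n\in\Gamma}(1+n)^{-2q}b_n^2$, so the Gaussian identity gives
\[
\E[\Re\cL_\mu(a^\Gamma)]=\int_{\R^{\N^d}}\exp\left(-\frac{\sigma^2}{2}\sum_{n\in\Gamma}(1+n)^{-2q}b_n^2\right)d\mu(b).
\]
Combining the two displays produces the desired inequality with the truncated sum in place of $\|b\|_{-q}^2$. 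I then let $\Gamma\uparrow\N^d$ along $\Gamma_N$ as $N\to\infty$: the integrands $1-\exp(-\tfrac{\sigma^2}{2}\sum_{n\in\Gamma}(1+n)^{-2q}b_n^2)$ increase to $1-\exp(-\tfrac{\sigma^2}{2}\|b\|_{-q}^2)$ (with the convention $e^{-\infty}=0$), while the right-hand side increases to $\eps+c\sigma^2\zeta(2(q-p))^d$, and monotone convergence delivers the main estimate.

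For the final assertion, I observe that on $\S_{-q}^c=\{b:\|b\|_{-q}=\infty\}$ the integrand equals $1$, so the left-hand side of the main estimate is at least $\mu(\S_{-q}^c)=1-\mu(\S_{-q})$; hence $1-\mu(\S_{-q})\le \eps+c\sigma^2\zeta(2(q-p))^d$, and letting $\sigma\to0$ gives $\mu(\S_{-q})\ge1-\eps$. The crux of the whole argument is recognizing and correctly setting up the Gaussian averaging: choosing the weights $(1+n)^{-q}$ so that the variance of $\la b,a^\Gamma\ra$ reproduces exactly $\|b\|_{-q}^2$ while $\E\|a^\Gamma\|_p^2$ collapses to a convergent $\zeta$-sum. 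Once that device is in place, every remaining step (the Fubini exchange, the variance computation, and the monotone passage to the limit) is routine.
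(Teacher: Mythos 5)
Your proof is correct and is essentially the paper's own argument: your random sequence $a^\Gamma_n=\sigma(1+n)^{-q}G_n\ind_{n\in\Gamma}$ has precisely the law of the Gaussian measure $\nu_{m,\sigma}$ that the paper integrates against via Fubini, and your variance computation, $\zeta$-sum bound, monotone passage $\Gamma_N\uparrow\N^d$, and final $\sigma\downarrow0$ step all mirror the paper's proof. The only difference is that you phrase the Gaussian averaging probabilistically rather than as integration against an explicit product measure, which is purely cosmetic.
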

\begin{proof}
Recall the expression of $(1+n)^q$ in~\eqref{eq:1+n}.
For all $\sigma>0$ and $m\in\N$, we consider the Gaussian probability measure $\nu_{m,\sigma}$ on $\R^{\Gamma_m}$ defined by, 
for $a = (a_n)_{n\in\Gamma_m}\in \R^{\Gamma_m}$,
 \[
  d\nu_{m,\sigma}(a)=\bigotimes_{n\in \Gamma_m}(2\pi\sigma^2 (n+1)^{-2q})^{-1/2}\exp\left(-\frac{a_n^2}{2\sigma^2(n+1)^{-2q}}\right) da_n.
 \]
Then $\nu_{m,\sigma}$ satisfies:
\begin{enumerate}[(i)]
  \item $\ds\nu_{m,\sigma}(\R^{\Gamma_m})=1$, 
  \item $\ds\int_{\R^{\Gamma_m}} a_na_m d\nu_{m,\sigma}(a)=\sigma^2 (n+1)^{-2q}\ind_{\{n=m\}}$, and
  \item $\ds\int_{\R^{\Gamma_m}}e^{i\langle b,a \rangle} d\nu_{m,\sigma}(a)=\exp\left(-\frac{\sigma^2}{2}\sum_{n\in \Gamma_m} (n+1)^{-2q} b_n^2\right)$.
\end{enumerate}
By Fubini's theorem, we have
\begin{align*}
&\hspace{-70pt}\int_{\R^{\N^d}}1-\exp\left(-\frac{\sigma^2}{2}\sum_{n\in \Gamma_m} (n+1)^{-2q} b_n^2\right)d\mu(b)\\
&=\int_{\R^{\N^d}}\int_{\R^{\Gamma_m}}1-e^{i\langle b,a \rangle} d\nu_{m,\sigma}(a)d\mu(b)\\
&=\int_{\R^{\Gamma_m}}\int_{\R^{\N^d}}1-e^{i\langle b,a \rangle}d\mu(b) d\nu_{m,\sigma}(a)\\
&=\int_{\R^{\Gamma_m}}1-\cL_\mu(a) d\nu_{m,\sigma}(a),
\end{align*}
which is then a real quantity, and by assumption,
\begin{align*}
\int_{\R^{\Gamma_m}}1-\cL_\mu(a) d\nu_{m,\sigma}(a)
& \le \eps+c \int_{\R^{\Gamma_m}} \|a\|_p^2 d\nu_{m,\sigma}(a)\\
&=\eps +c\sum_{n\in \Gamma_m} (n+1)^{2p}\int_{\R^{\Gamma_m}}a_n^2d\nu_{m,\sigma}(a)\\
&=\eps +c\sigma^2\sum_{n\in \Gamma_m}(n+1)^{-2(q-p)}.
\end{align*}
Since $\sum_{n\in \Gamma_m} (n+1)^{-2(q-p)}\uparrow \zeta(2(q-p))^d$ as $m\to\infty$, by dominated convergence, we get
\[
 \int_{\R^{\N^d}}1- \exp\left(-\frac{\sigma^2}{2}\sum_{n\in \R^{\N^d}}(n+1)^{-2q}b_n^2\right)d\mu(b)\le \eps +c\sigma^2\zeta(2(q-p))^d.
\]
Now, since $\exp(-\sigma^2\nn b_{-q}^2/2) = 0$ for all $b\notin\S_{-q}$, it follows that
\[
 \mu(\S_{-q})\ge  \int_{\R^{\N^d}}\exp\left(-\frac{\sigma^2}{2}\|b\|_{-q}^2\right)d\mu(b) \ge 1-\epsilon-c\sigma^2\zeta(2(q-p))^d.
\]
The last desired statement follows by taking $\sigma\downarrow0$.
\end{proof}

\begin{proof}[Proof of Theorem~\ref{thm:MinlosS'}]
Let $\cL:\S\to \C$ be positive-definite, continuous at $0$, and $\cL(0)=1$. For any finite subset $\Gamma$ of $\N^d$, we can consider the restriction of $\cL$ to $\R^\Gamma\subset\S$ (by completing the multi-sequences of $\R^\Gamma$ by zeros). Thus, by Bochner's theorem, there exists a unique probability measure $\mu_\Gamma$ on the product $\sigma$-field of $\R^\Gamma$, such that, for all $a\in\R^\Gamma$,
 \[
  \cL(a)=\int_{\R^\Gamma}e^{i\langle b,a\rangle}d\mu_\Gamma(b).
 \]
Further, if $\Gamma\subset \Gamma'$ are finite subsets of $\R^{\N^d}$ and $\pi_{\Gamma',\Gamma}$ is the canonical projection from $\R^{\Gamma'}$ to $\R^\Gamma$, we have $\mu_{\Gamma'}\circ \pi_{\Gamma',\Gamma}^{-1}=\mu_\Gamma$. By Kolomogorov's extension theorem \citep[Theorem 5.16]{Kal97} there exists a unique probability measure $\mu$ on the product $\sigma$-field of $\R^{\N^d}$ such that for all finite $\Gamma$, $\mu\circ \pi_\Gamma^{-1}=\mu_\Gamma$, where  $\pi_\Gamma$ is the canonical projection from $\R^{\N^d}$ to $\R^\Gamma$. Since any element $a\in\S$ can be approximated by multi-sequences with only finitely many non-zero coefficients, by dominated convergence 
and the fact that $\cL$ is continuous,
we get that for all $a\in\S$, $\cL(a)=\cL_\mu(a)$.

\medskip

\noindent It remains to show that $\mu(\S')=1$.
Fix $\eps>0$.
The continuity of $\cL$ implies that there exists $p\in\N$ and $\delta>0$,
\[
 1-\Re \cL(a)\le \eps,\quad \text{ for all }a\in\S\text{ such that }\|a\|_p<\delta. 
\]
Thus,
 \[
 1-\Re \cL_\mu(a)\le \eps+ 2\delta^{-2}\|a\|_p^2,\quad \text{ for all }a\in\S. 
\]
By Lemma~\ref{lem:bound} for $q=p+1$, we have $\mu(\S_{-q})\ge 1-\varepsilon$. 
We infer
$\mu(\S')\ge 1-\eps$ for all $\eps>0$, that is $\mu(\S')=1$.
\end{proof}

In order to prove Lévy's continuity theorem in the next section, we need another Minlos--Bochner's theorem. Now, we consider random variables with values in $(\S,\cB(\S))$.
For such a random variable $X$, one can similarly define its characteristic functional by
$$
\cL_X(b)=\cL_\mu(a)=\int_{\S}e^{i\la b,a \ra}d\mu(a),\quad b \in \S',
$$
where $\mu=\P\circ X\inv$ is the law of $X$ on $(\S,\cB(\S))$.

\begin{thm}\label{thm:MinlosS}
If a functional $\cL:\S'\to \C$ is positive-definite, continuous at $0$ (with respect to the topology $\pi_s$), and $\cL(0)=1$, then it is the characteristic functional of a random variable on $(\S,\cB(\S))$.
\end{thm}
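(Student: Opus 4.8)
The plan is to follow the scheme of the proof of Theorem~\ref{thm:MinlosS'}, interchanging the roles of positive and negative indices (of $\S$ and $\S'$), the essential new difficulty being that the target space $\S=\bigcap_{p\in\N}\S_p$ is an \emph{intersection} rather than a union. First I would, for each finite $\Gamma\subset\N^d$, restrict $\cL$ to the coordinate subspace $\R^\Gamma\subset\S'$; this restriction is positive-definite, continuous, and equal to $1$ at $0$, so Bochner's theorem furnishes a probability measure $\mu_\Gamma$ on $\R^\Gamma$ with $\cL(b)=\int e^{i\la b,a\ra}d\mu_\Gamma(a)$ for $b\in\R^\Gamma$. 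Uniqueness in Bochner's theorem gives the projective consistency of $(\mu_\Gamma)_\Gamma$, and Kolmogorov's extension theorem then produces a probability measure $\mu$ on $(\R^{\N^d},\cB(\R^{\N^d}))$ with $\cL_\mu(b)=\cL(b)$ for all finitely supported $b$.

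The hard part will be to show $\mu(\S)=1$, that is $\mu(\S_{p'})=1$ for \emph{every} $p'\in\N$; unlike in Theorem~\ref{thm:MinlosS'} a single concentration estimate will not suffice. I would fix $p'\in\N$ and $\eps>0$, and use continuity of $\cL$ at $0$ with respect to $\pi_s$ to get a bounded set $B\subset\S$ and $\delta>0$ with $1-\Re\cL(b)\le\eps$ whenever $q_B(b)<\delta$. The decisive observation is that, by Cauchy--Schwarz, $q_B(b)=\sup_{a\in B}|\la b,a\ra|\le(\sup_{a\in B}\|a\|_p)\|b\|_{-p}$ for \emph{every} $p$, with $\sup_{a\in B}\|a\|_p<\infty$ since $B$ is bounded in $\S$. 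Thus one and the same neighborhood controls $\cL$ through any norm $\|\cdot\|_{-p}$, and I am free to take $p:=p'+1$. This converts the continuity estimate into $1-\Re\cL(b)\le\eps$ for $\|b\|_{-p}<\delta'$, and then---using that $\cL=\cL_\mu$ is a genuine characteristic function on each $\R^\Gamma$ together with $1-\cos(N\theta)\le N^2(1-\cos\theta)$---into a quadratic bound $1-\Re\cL_\mu(b)\le 2\eps+c\|b\|_{-p}^2$ for all finitely supported $b$. Lemma~\ref{lem:bound}, applied with its index equal to $-(p'+1)$ and with $q=-p'$, then gives $\mu(\S_{p'})=\mu(\S_{-q})\ge 1-2\eps$; letting $\eps\downarrow0$ and intersecting over $p'$ yields $\mu(\S)=1$.

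Finally, since $\S$ is a Borel subset of $\R^{\N^d}$ with trace $\sigma$-field $\cB(\S)=\cC(\S)$, the measure $\mu$ may be regarded as a probability measure on $(\S,\cB(\S))$, and $\cL_\mu(b)=\int_\S e^{i\la b,a\ra}d\mu(a)$ is now defined for all $b\in\S'$. To upgrade $\cL_\mu=\cL$ from finitely supported sequences to all of $\S'$, I would take $b\in\S_{-p}$, approximate it by finitely supported $b_k\to b$ in $\|\cdot\|_{-p}$, and observe that $\cL_\mu(b_k)\to\cL_\mu(b)$ by dominated convergence (as $|\la b_k-b,a\ra|\le\|b_k-b\|_{-p}\|a\|_p\to0$ for $a\in\S$) while $\cL(b_k)\to\cL(b)$ because $b_k\to b$ in $\pi_s$ by Lemma~\ref{lem:cont} and $\cL$, being positive-definite and continuous at $0$, is uniformly continuous on $(\S',\pi_s)$. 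Since $\cL_\mu(b_k)=\cL(b_k)$, the two limits coincide. Taking $(\Omega,\cA,\P)=(\S,\cB(\S),\mu)$ with $X$ the identity then exhibits $\cL$ as the characteristic functional of a random variable on $(\S,\cB(\S))$, as required.
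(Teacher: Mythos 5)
Your proposal is correct and follows essentially the same route as the paper's proof: Bochner plus Kolmogorov extension to build $\mu$ on $\R^{\N^d}$, then the continuity of $\cL$ at $0$ in $\pi_s$ transferred to each norm $\|\cdot\|_{-p}$ (your Cauchy--Schwarz bound $q_B(b)\le(\sup_{a\in B}\|a\|_p)\|b\|_{-p}$ is exactly the content of Lemma~\ref{lem:cont}, which the paper cites instead), followed by Lemma~\ref{lem:bound} for every $p'$ to get $\mu(\S_{p'})=1$ and hence $\mu(\S)=1$. The only differences are cosmetic --- the paper uses the cruder bound $1-\Re\cL\le 2$ in place of your $1-\cos(N\theta)\le N^2(1-\cos\theta)$ trick, and it swaps the order of the limits in $\eps$ and $p'$ --- while your final paragraph upgrading $\cL_\mu=\cL$ from finitely supported $b$ to all of $\S'$ spells out a step the paper leaves implicit.
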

\begin{proof}
Let $\cL:\S'\to \C$ be as in the statement.
As we did in the proof of Theorem~\ref{thm:MinlosS'}, using Bochner's theorem and Kolmogorov's extension theorem, we can show that there exists a probability measures $\mu$ on $\cB(\R^{\N^d})$ such that for all $b\in\S'$, $\cL_\mu(b)=\cL(b)$.

\medskip

\noindent Again, it remains to show that $\mu(\S)=1$.
Fix $\eps>0$. Since $\cL$ is continuous on $\S'$, by Lemma~\ref{lem:cont}, its restriction to $\S_{-p}$ is also continuous for each $p\in\N$. Thus, for all $p\in\N$, there exists $\delta_p>0$,
\[
 1-\Re \cL(b)\le \eps,\quad \text{ for all }b\in\S_{-p}\text{ such that }\|b\|_{-p}<\delta_p. 
\]
Thus,
\[
 1-\Re \cL_\mu(b)\le \eps+ 2\delta_p^{-2}\|b\|_{-p}^2,\quad \text{ for all }b\in\S_{-p}. 
\]
By Lemma~\ref{lem:bound} for $q=-p+1$,  $\mu(\S_{p-1})\ge 1-\eps$. Since this is true for all $p\ge 1$, and since the sets $(\S_p)_{p\in\N}$ form a non-increasing sequence, we get 
\[
\mu(\S)=\mu\left(\bigcap_{p\ge0}\S_p \right)\ge 1-\eps.
\] 
Finally, $\eps>0$ being arbitrary, $\mu(\S)=1$.
\end{proof}

\section{L\'evy's continuity theorem}\label{sec:Levy}

For   Borel probability measures   $(\mu_n)_{n\ge 1}$ and $\mu$ on $(\S',\cB(\S'))$, we say that $\mu_n$ converges weakly  to $\mu$ with respect to the strong topology (of $\S'$), if 
$$
\limn\int_{\S'} \varphi(b) d\mu_n(b)=\int_{\S'} \varphi(b) d\mu(b),\quad \text{ for all }\varphi\in\cC_b(\S',\pi_s).
$$
Accordingly, for generalized random fields $(X_n)_{n\ge1}$ and $X$ in $\S'$, we say $X_n$ converges in distribution to $X$ with respect to the strong topology (of $\S'$), if the corresponding probability law converges weakly. 
Thanks to  Proposition \ref{prop:homeoS'}, to prove  Theorem~\ref{thm:LevyGRF} it is equivalent to prove the following.

\begin{thm}\label{thm:levy}
If $(\mu_n)_{n\ge 1}$ is a sequence of Borel probability measures on $\S'$ such that $\cL_{\mu_n}$ converges pointwise to a functional $\cL:\S\to \C$ which is continuous at $0$, then there exists a Borel probability measure $\mu$ on $\S'$ such that $\cL_\mu=\cL$ and 
 $\mu_n$ converges weakly to $\mu$ with respect to the strong topology.
\end{thm}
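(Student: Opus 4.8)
The plan is to separate the statement into two tasks---producing the limit $\mu$ with $\cL_\mu=\cL$, and upgrading to weak convergence for the strong topology---the second being where the real work lies. Being a pointwise limit of characteristic functionals, $\cL$ is positive-definite with $\cL(0)=1$, and it is assumed continuous at $0$; so Minlos--Bochner (Theorem~\ref{thm:MinlosS'}) directly furnishes a Borel probability measure $\mu$ on $\S'$ with $\cL_\mu=\cL$. Applying Lemma~\ref{lem:bound} to $\mu$ itself shows that $\mu$ is carried, up to arbitrarily small mass, by balls $\{\nn b_{-q}\le R\}$; and since $\sum_{n\in\N^d}(1+n)^{-2(q'-q)}=\zeta(2(q'-q))^d<\infty$ for integers $q'>q$, such a ball is compact for the $\nn\cdot_{-q'}$-norm (it is $\nn\cdot_{-q'}$-closed by lower semicontinuity of $\nn\cdot_{-q}$ and the embedding $\S_{-q}\hookrightarrow\S_{-q'}$ is Hilbert--Schmidt), hence compact in $(\S',\pi_s)$ by continuity of the injection $i_{q'}$ (Lemma~\ref{lem:cont}). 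Thus the \emph{limit} is strongly tight.

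Next I would record the two soft convergence facts. Since the sequence $te_k\in\S$ with a single non-zero entry pairs off individual coordinates, pointwise convergence of $\cL_{\mu_n}$ gives convergence of every finite-dimensional characteristic function, so by the classical Lévy theorem in $\R^m$ all finite-dimensional marginals of $\mu_n$ converge; equivalently, $\mu_n\to\mu$ weakly for the product topology on $\R^{\N^d}$. For the eventual identification of subsequential limits I would use that $b\mapsto e^{i\la b,a\ra}$ is a bounded $\pi_s$-continuous function, so weak convergence for $\pi_s$ forces convergence of the characteristic functionals, which determine the cylinder $\sigma$-field and hence, by Corollary~\ref{cor:sigmafields}, the full Borel $\sigma$-field; any strong-weak limit point is therefore forced to equal $\mu$.

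The heart of the matter, and the step I expect to be the main obstacle, is \emph{uniform} tightness of $(\mu_n)_{n\ge1}$ for the strong topology: I must exhibit, for each $\eps>0$, one ball $\{\nn b_{-q}\le R\}$ of $\mu_n$-mass $\ge1-\eps$ for all $n$. By Lemma~\ref{lem:bound} this reduces to the uniform quadratic estimate $1-\Re\cL_{\mu_n}(a)\le\eps+c\nn a_p^2$ for all finitely supported $a$ and all large $n$. Continuity of $\cL$ at $0$ yields this for the limit $\cL$, but $\cL_{\mu_n}\to\cL$ only pointwise, so passing it to the pre-limit uniformly is an equicontinuity-at-$0$ statement that is not free. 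Here I would bring in Theorem~\ref{thm:MinlosS}: it produces a centered Gaussian random variable \emph{valued in $\S$}, with law $\gamma$ and characteristic functional $e^{-\frac12 Q(\cdot)}$ for a quadratic form $Q$ that is $\pi_s$-continuous at $0$. Because $\gamma$ is supported on $\S$, the Fubini identity $\int_\S(1-\Re\cL_{\mu_n}(\sqrt t\,a))\,d\gamma(a)=\int_{\S'}\bigl(1-e^{-\frac t2 Q(b)}\bigr)\,d\mu_n(b)$ can be passed to the limit in $n$ by dominated convergence, as $\gamma$-almost every $a$ lies in $\S$ where the hypothesis $\cL_{\mu_n}(a)\to\cL(a)$ applies; letting the scale $t\downarrow0$ and using continuity of $\cL$ at $0$ then controls the escaping mass uniformly in $n$, which, together with the product-topology control of the low-frequency coordinates, delivers uniform tightness. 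The genuine difficulty concentrated in this step is that an $\S$-supported averaging measure can only control a form $Q$ weaker than every $\nn\cdot_{-q}$, whereas strong compactness lives precisely at the scale of the norms $\nn\cdot_{-q}$; reconciling the two requires a careful simultaneous choice of the truncation, the scale $t$, and the index $q$, and this is the point I would spend the most care on.

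Finally I would conclude by the standard compactness argument. Uniform tightness makes $(\mu_n)$ relatively compact for weak convergence in $(\S',\pi_s)$; by the previous paragraph every subsequential limit is also a product-topology limit and hence equals $\mu$, so the whole sequence converges weakly to $\mu$ for the strong topology, giving $\cL_\mu=\cL$ and the asserted convergence. That the strong and weak topologies agree on each strong-compact ball (a continuous bijection from a compact space to a Hausdorff space is a homeomorphism) is what reconciles the strong statement with its weak counterpart and underlies Corollary~\ref{coro:Levy}.
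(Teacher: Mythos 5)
Your skeleton is the paper's: Minlos--Bochner (Theorem~\ref{thm:MinlosS'}) produces $\mu$, uniform tightness of $(\mu_n)_{n\ge1}$ is the crux, and Prokhorov plus identification of subsequential limits through their characteristic functionals (via Corollary~\ref{cor:sigmafields}) finishes. The gap is that the crux is never actually proved, and the mechanism you propose for it cannot work as stated. Suppose $\gamma$ is \emph{any} probability measure with $\gamma(\S)=1$ whose characteristic functional is $e^{-\frac12 Q}$. If some level set $\{b\in\S' : Q(b)\le R\}$ were strongly bounded, it would lie in a ball $\{\|b\|_{-p}\le C\}$ (Lemma~\ref{lem:equicont} plus the definition of equicontinuity), and quadratic homogeneity would give $Q(b)\ge (R/C^2)\|b\|_{-p}^2$ for all $b\in\S'$; but then the $\gamma$-variance of the $n$-th coordinate, which equals $Q(e^{(n)})$ for $e^{(n)}$ the multi-sequence with $1$ at index $n$ and $0$ elsewhere, would be at least $(R/C^2)(1+n)^{-2p}$, so the terms $(1+n)^{2(p+1)}a_n^2$ of $\|a\|_{p+1}^2$ would exceed $1$ with probability tending to one, forcing $\|a\|_{p+1}=\infty$ $\gamma$-a.s.\ and contradicting $\gamma(\S)=1$. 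So the level sets of any such $Q$ are never strongly bounded, hence never strongly compact, and no uniform bound $\sup_{n\ge N}\int_{\S'}(1-e^{-\frac t2 Q})\,d\mu_n\le\eps$ --- however you tune $t$, $q$ and the truncation --- can by itself place mass $1-\eps$ on a strongly compact set. Splitting off finitely many coordinates does not help: the obstruction sits in the tail, exactly where $Q$ is weaker than every $\|\cdot\|_{-q}$. (The other natural fix --- transferring $1-\Re\cL(a)\le\eps+c\|a\|_p^2$ from $\cL$ to the $\cL_{\mu_n}$ by pointwise convergence --- also fails: Lemma~\ref{lem:bound} needs this bound for all finitely supported $a$ simultaneously for each fixed $n$, while pointwise convergence provides it only for $n$ large depending on $a$, and the resulting interchange of limits goes the wrong way.)

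What is missing is the paper's key manoeuvre: run your averaging identity inside a proof by contradiction, so that $Q$ never has to dominate a norm. Assume $(\Re\cL_{\mu_n})_{n\ge1}$ is \emph{not} equicontinuous at $0$: there exist $\eps>0$, $a_k\to 0$ in $\S$ (after extraction, $2^k a_k\to 0$) and $n_k\uparrow\infty$ with $1-\Re\cL_{\mu_{n_k}}(a_k)\ge\eps$. Take $Q(b)=\sum_{k\ge k_0}\la b,a_k\ra^2$, built from the failure sequence itself: it is finite on $\S'$, negative-definite, and $\pi_s$-continuous at $0$ because $\{2^k a_k\}$ is bounded in $\S$; hence $U=e^{-Q}$ is the characteristic functional of a measure $\nu$ with $\nu(\S)=1$ by Theorem~\ref{thm:MinlosS}. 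Your Fubini/dominated-convergence step, applied verbatim with this $\nu$, gives $\int_{\S'}(1-U)\,d\mu_n\to\int_{\S'}(1-U)\,d\mu$, and the limit can be made arbitrarily small by enlarging $k_0$. Since $Q\ge\la\cdot,a_k\ra^2$ termwise for every $k\ge k_0$, and since $1-\cos x\le M(1-e^{-x^2})$ with $M=\sup_{x\ge0}(1-\cos x)/(1-e^{-x^2})<\infty$, choosing $k_0$ so that $\lim_n\int_{\S'}(1-U)\,d\mu_n<\eps/(2M)$ yields $1-\Re\cL_{\mu_n}(a_k)<\eps$ for all $k\ge k_0$ and all large $n$ --- contradicting the assumed failure at $(n_k,a_k)$ for large $k$. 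This equicontinuity is precisely the uniform-in-$n$ hypothesis that lets Lemma~\ref{lem:bound} run simultaneously for all $\mu_n$, and together with strong compactness of the balls $\{\|b\|_{-p-1}\le\kappa\}$ it gives tightness (Lemma~\ref{lem:tight}). Incidentally, your Hilbert--Schmidt argument for that compactness is a clean alternative to the paper's Proposition~\ref{prop:strongly_compact}; but without the reductio above, your proposal stops exactly where the theorem's real difficulty begins.
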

\begin{proof}
Let $(\mu_n)_{n\ge 1}$ be as in the statement.
By pointwise convergence, $\cL$ is also positive-definite and $\cL(0)=1$. By Theorem~\ref{thm:MinlosS'},
there exists a Borel probability measure $\mu$ on $\S'$ such that $\cL_\mu=\cL$.

\medskip

\noindent Let $\cP(\S')$ denote the set of all Borel probability measures on $\S'$. The main step of the proof is to show that, under the assumptions, the sequence $(\mu_n)_{n\ge 1}$ is tight in $\cP(\S')$, i.e.\ for all $\eps>0$, there exists a strongly compact set $K$ in $\S'$ such that $\mu_n(K)\ge 1-\eps$ for all $n\ge 1$. 

\medskip

\noindent Assume we have proved it and let us conclude the proof. From tightness, we infer that the sequence $(\mu_n)_{n\ge 1}$ is relatively sequentially 
compact with respect to the topology of the weak convergence in $\cP(\S')$, see \citep[Theorem 2.2.1]{KalXio95}.
Thus, from any subsequence of $(\mu_n)_{n\ge 1}$, we can extract a sub-subsequence that converges weakly to a probability measure $\nu\in\cP(\S')$. But, by weak convergence, the characteristic functional of $\nu$ is necessarily the pointwise limit of $(\cL_{\mu_n})_{n\ge 1}$, which is $\cL$. Hence $\nu=\mu$ and we conclude that $\mu_n$ converges weakly to $\mu$.

\medskip

\noindent It remains to prove that $(\mu_n)_{n\ge 1}$ is tight in $\cP(\S')$. This follows from the following two lemmas.
\end{proof}
\begin{lem}\label{lem:tight}
Let $(\mu_n)_{n\ge 1}$ be a sequence of Borel probability measures on $\S'$. If
$(\Re \cL_{\mu_n})_{n\ge 1}$ is equicontinuous on $\S$, then $(\mu_n)_{n\ge 1}$ is tight in $\cP(\S')$.
\end{lem}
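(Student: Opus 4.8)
The plan is to produce, for each $\eps>0$, a single norm ball of some $\S_{-q}$ that carries mass at least $1-\eps$ under every $\mu_n$, and then to upgrade such a ball to a strongly compact set by passing to a slightly weaker space. First I would exploit the equicontinuity of $(\Re\cL_{\mu_n})_{n\ge1}$ at $0$: since a basis of neighbourhoods of $0$ in $\S$ is given by the balls $B_p(r)$, there exist $p\in\N$ and $\delta>0$, \emph{independent of $n$}, such that $\|a\|_p<\delta$ implies $1-\Re\cL_{\mu_n}(a)\le\eps$ for all $n$. As in the proof of Theorem~\ref{thm:MinlosS'}, a two-case argument using only $\Re\cL_{\mu_n}\ge-1$ then upgrades this to the quadratic bound
\[
1-\Re\cL_{\mu_n}(a)\le\eps+2\delta^{-2}\|a\|_p^2,\qquad a\in\S,\ n\ge1,
\]
which, crucially, is uniform in $n$.

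Next I would apply Lemma~\ref{lem:bound} to each $\mu_n$ with this common $p$, the constant $c=2\delta^{-2}$, and $q=p+1$, obtaining for every $\sigma>0$ the uniform estimate $\int_{\S'}1-\exp(-\tfrac{\sigma^2}{2}\|b\|_{-q}^2)\,d\mu_n(b)\le\eps+2\delta^{-2}\sigma^2\zeta(2)^d$. Using the elementary inequality $1-\exp(-\tfrac{\sigma^2}{2}\|b\|_{-q}^2)\ge(1-e^{-\sigma^2t^2/2})\ind_{\{\|b\|_{-q}>t\}}$, this yields a Markov-type tail bound on $\|b\|_{-q}$. Choosing first $\sigma$ small enough that $2\delta^{-2}\sigma^2\zeta(2)^d\le\eps$, and then $t$ large enough that $1-e^{-\sigma^2t^2/2}\ge1/2$, I get $\mu_n(\{b:\|b\|_{-q}>t\})\le4\eps$ for every $n$; equivalently the closed ball $B=\{b\in\S':\|b\|_{-q}\le t\}$ of $\S_{-q}$ satisfies $\mu_n(B)\ge1-4\eps$ uniformly in $n$. (Running the argument with $\eps/4$ in place of $\eps$ gives mass at least $1-\eps$.)

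The main obstacle is that $B$, being a norm ball of the infinite-dimensional Hilbert space $\S_{-q}$, is not itself strongly compact, so the final step is to enlarge it inside a weaker space. I would show that the inclusion $\S_{-q}\hookrightarrow\S_{-(q+1)}$ is compact: for $b\in B$ the tail $\sum_{n:\,(1+n)>M}(1+n)^{-2(q+1)}b_n^2\le\big(\sup_{n:\,(1+n)>M}(1+n)^{-2}\big)\|b\|_{-q}^2$ is uniformly small, because $\{n\in\N^d:(1+n)\le M\}$ is finite and hence $(1+n)^{-2}\to0$ off a finite set, while the finitely many remaining coordinates range over a bounded subset of a finite-dimensional space; thus $B$ is totally bounded, and so relatively compact, in $\S_{-(q+1)}$. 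Let $K$ be the closure of $B$ in $(\S_{-(q+1)},\|\cdot\|_{-(q+1)})$; then $K$ is norm-compact in $\S_{-(q+1)}=\S_{q+1}'$, and since by Lemma~\ref{lem:cont} the injection $\S_{q+1}'\to(\S',\pi_s)$ is continuous, $K$ is compact for the strong topology of $\S'$. Finally $B\subset K\subset\S'$ gives $\mu_n(K)\ge\mu_n(B)\ge1-\eps$ for all $n$, which is precisely the tightness of $(\mu_n)_{n\ge1}$ in $\cP(\S')$.
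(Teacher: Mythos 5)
Your proof is correct, and it follows the paper's argument only up to the uniform integral estimate; the compactness step at the end is genuinely different. Both you and the paper use equicontinuity to get a uniform $p,\delta$, upgrade to the quadratic bound $1-\Re\cL_{\mu_n}(a)\le\eps+2\delta^{-2}\|a\|_p^2$, and feed this into Lemma~\ref{lem:bound} with $q=p+1$ to obtain $\int_{\S'}1-\exp(-\tfrac{\sigma^2}{2}\|b\|_{-q}^2)\,d\mu_n(b)\le\eps+2\delta^{-2}\sigma^2\zeta(2)^d$ uniformly in $n$. From there the paper takes $K=\{b\in\S'\mid\|b\|_{-q}\le\kappa\}$ itself as the compact set, invoking Proposition~\ref{prop:strongly_compact} (strongly closed and bounded subsets of $\S'$ are strongly compact, i.e.\ the Montel property, whose proof occupies the appendix), and concludes by splitting the integral over $K$ and $K^c$. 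You instead extract a Markov-type tail bound $\mu_n(\|b\|_{-q}>t)\le4\eps$, and then build compactness by hand: the ball $B$ of $\S_{-q}$ is totally bounded in the weaker space $\S_{-(q+1)}$ (a correct argument, amounting to the compactness of the Hilbert--Schmidt embedding $\S_{-q}\hookrightarrow\S_{-(q+1)}$), so its closure $K$ there is norm-compact, and $K$ is strongly compact in $\S'$ by the continuity of the injection in Lemma~\ref{lem:cont}. What your route buys is a proof of tightness that bypasses the appendix entirely, replacing the Montel-space machinery (Tychonoff, equicontinuity, comparison of weak and strong topologies) with an elementary total-boundedness computation; this is arguably more self-contained. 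One small inaccuracy worth noting: your motivating claim that $B$ ``is not itself strongly compact'' in $\S'$ is false --- by Proposition~\ref{prop:strongly_compact} this closed bounded set \emph{is} strongly compact, the point being that the strong topology of $\S'$ restricted to $B$ is much coarser than the $\S_{-q}$-norm topology. This does not affect the validity of your proof, since you never use that claim; it only means your enlargement to $K$ is an alternative to, rather than a necessity forced by, the paper's appendix.
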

\begin{proof}
Let $\eps>0$.
The equicontinuity of $(\Re \cL_{\mu_n})_{n\ge 1}$ implies that there exist $p\in\N$ and $\delta>0$ such that for all $n\ge1$, 
 \[
  \|a\|_p<\delta  \mbox{ implies }
   1-\Re \cL_{\mu_n}(a)<\eps.
\]
By Lemma~\ref{lem:bound} for $q=p+1$, for all $n\ge 1$ and all $\sigma>0$,
\[
 \int_{\S'}1- \exp\left(-\frac{\sigma^2}{2}\|b\|_{-p-1}^2\right)d\mu_n(b)\le \eps +2\delta^{-2}\sigma^2\zeta(2)^d.
\]
Then, fix $\sigma>0$ such that $2\delta^{-2}\sigma^2\zeta(2)^d=\eps$ and choose $\kappa>0$ such that $\exp\left(-\sigma^2\kappa^2/2\right)\le\eps$.
We need the fact that 
the set
\[
 K=\{b\in\S'\mid \|b\|_{-p-1}\le\kappa\}
\]
is strongly compact in $\S'$.
This follows from  the fact that in $\S'$, the strongly closed and bounded subsets are strongly compact. We provide a proof of this fact in  Proposition~\ref{prop:strongly_compact} in the appendix.
The set $\{b\in\S'\mid \|b\|_{-p}\le C\}$ is strongly (or weakly) closed as the intersection of the closed sets $\{b\in\S'\mid |\langle b,a\rangle|\le C\}$, $a\in\S$ with $\|a\|_p\le 1$. It is strongly bounded since for all bounded set $B\in\S$, with $M=\sup_{a\in B}\|a\|_p<\infty$, we have $q_B(b)\le M\|b\|_{-p}$.

We have, for all $n\ge1$,
\begin{align*}
\mu_n(K)
&\ge \int_{\S'}\exp\left(-\frac{\sigma^2}{2}\|b\|_{-p-1}^2\right)d\mu_n(b)- \int_{K^c}\exp\left(-\frac{\sigma^2}{2}\|b\|_{-p-1}^2\right)d\mu_n(b)\\
& \ge 1-2\eps-\eps.
\end{align*}
Thus $(\mu_n)_{n\ge 1}$ is tight.
\end{proof}
\begin{lem}$(\Re \cL_{\mu_n})_{n\ge 1}$  is equicontinuous. 
\end{lem}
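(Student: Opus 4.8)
The plan is to first reduce equicontinuity of the whole family to equicontinuity at the origin, and then to establish the latter by controlling the finite-dimensional marginals of the $\mu_n$ uniformly in $n$.

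\emph{Reduction to the origin.} For every $b\in\S'$ and $a,a'\in\S$ we have $e^{i\la b,a\ra}-e^{i\la b,a'\ra}=e^{i\la b,a'\ra}(e^{i\la b,a-a'\ra}-1)$, so by the Cauchy--Schwarz (Jensen) inequality for the probability measure $\mu_n$, together with $|e^{iu}-1|^2=2(1-\cos u)$,
\begin{equation*}
|\cL_{\mu_n}(a)-\cL_{\mu_n}(a')|^2\le\int_{\S'}|e^{i\la b,a-a'\ra}-1|^2\,d\mu_n(b)=2\bigl(1-\Re\cL_{\mu_n}(a-a')\bigr).
\end{equation*}
Since the right-hand side depends only on $a-a'$ and vanishes at $a=a'$, it suffices to prove \textbf{equicontinuity at $0$}: for every $\eps>0$ there exist $p\in\N$ and $\delta>0$ such that $\|a\|_p<\delta$ implies $1-\Re\cL_{\mu_n}(a)\le\eps$ for all $n$. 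The displayed inequality then transfers equicontinuity to every point.

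\emph{Main step.} I would establish the quantitative bound $\sup_n\bigl(1-\Re\cL_{\mu_n}(a)\bigr)\le\eps+c\|a\|_p^2$ for all finitely supported $a$, which is precisely the hypothesis feeding Lemma~\ref{lem:bound} in the following lemma. Two facts drive this. First, writing $\psi_n:=1-\Re\cL_{\mu_n}$ and using $1-\cos u=2\sin^2(u/2)$ identifies $\sqrt{\psi_n(a)/2}$ with the $L^2(\mu_n)$-norm of $b\mapsto\sin(\tfrac12\la b,a\ra)$; Minkowski's inequality then gives the subadditivity $\sqrt{\psi_n(a+a')}\le\sqrt{\psi_n(a)}+\sqrt{\psi_n(a')}$. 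Second, for each finite $\Gamma\subset\N^d$ the marginal characteristic functions $\cL_{\mu_n}|_{\R^\Gamma}$ converge pointwise to $\cL|_{\R^\Gamma}$, which is continuous at $0$; by the classical L\'evy continuity theorem on $\R^{\#\Gamma}$ the marginals $(\mu_n\circ\pi_\Gamma^{-1})_n$ are tight uniformly in $n$, yielding for each $j\in\N^d$ a uniform-in-$n$ modulus for $t\mapsto 1-\Re\cL_{\mu_n}(te_j)$ near $0$. I would then sum these coordinatewise estimates through the subadditivity above, using the weight $(1+j)^{2p}$ in $\|\cdot\|_p$ to make the resulting series converge.

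\emph{Main obstacle and remaining points.} The delicate part is making the coordinatewise control uniform \emph{simultaneously} in $n$ and in the coordinate index: a single finite marginal is harmless, but summing over all $j$ requires the per-coordinate tail levels to be summable while the associated truncation thresholds remain compatible with one fixed weighted norm. This is where the limit enters. By Theorem~\ref{thm:MinlosS'} the functional $\cL$ is the characteristic functional of a measure $\mu$ with $\mu(\S')=1$, so $\mu$-almost every $b$ satisfies $|b_j|\le c(b)(1+j)^{p_0}$ for some $p_0$; hence the marginal tails of $\mu$ decay at the common scale $(1+j)^{p_0}$, and transferring this to the $\mu_n$ through the finite-dimensional weak convergence provides tail rates uniform in $j$, which close the estimate once $p>p_0$ is taken large enough. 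Finally, since the finite-dimensional convergence only controls $n\ge N$, the finitely many measures $\mu_1,\dots,\mu_{N-1}$ are handled individually: each $\cL_{\mu_n}$ is continuous on $\S$ (by dominated convergence, as $\mu_n(\S')=1$), hence individually equicontinuous at $0$, and one takes the common neighbourhood.
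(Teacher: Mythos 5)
Your first two steps are sound: the reduction of equicontinuity to equicontinuity at $0$ via $|\cL_{\mu_n}(a)-\cL_{\mu_n}(a')|^2\le 2\bigl(1-\Re\cL_{\mu_n}(a-a')\bigr)$ is correct, and so is the subadditivity of $a\mapsto\sqrt{1-\Re\cL_{\mu_n}(a)}$ and the fact that, for each \emph{fixed} coordinate $j$ (or fixed finite $\Gamma$), the classical L\'evy theorem gives a uniform-in-$n$ modulus for the marginal characteristic functions. The gap is exactly at the point you flag as the ``main obstacle,'' and it is not closed by what you propose. Weak convergence of the $j$-th marginal gives, by portmanteau, $\limsup_n\mu_n\bigl(|b_j|\ge \kappa(1+j)^{p_0}\bigr)\le \mu\bigl(|b_j|\ge \kappa(1+j)^{p_0}\bigr)$, i.e.\ a bound valid only for $n\ge N_j$, where the rank $N_j$ depends on $j$ (and on the accuracy $\eps_j$ you demand at coordinate $j$, which must decay polynomially for the weighted sum to converge). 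Since there are infinitely many coordinates, $\sup_j N_j$ can be infinite, so your final step --- ``the finitely many measures $\mu_1,\dots,\mu_{N-1}$ are handled individually'' --- does not apply: the union over $j$ of the exceptional index sets $\{n<N_j\}$ need not be finite. Moreover, handling an exceptional $\mu_n$ individually only yields $\mu_n(\S_{-p_n})\ge 1-\eps$ for some exponent $p_n$ depending on $n$, unbounded along the sequence, so no fixed $p>p_0$ closes the estimate. What you are asserting (tail bounds at a common polynomial scale, uniform in $n$ \emph{and} $j$) is essentially equivalent to the uniform tightness that the whole lemma is designed to produce; the argument is circular at its crucial point.

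The paper resolves this with a genuinely different, global device that you may want to compare. It argues by contradiction: if equicontinuity fails, there are $\eps>0$, $a_k\to 0$ in $\S$ (after extraction, $2^k a_k\to 0$) and indices $n_k\uparrow\infty$ with $1-\Re\cL_{\mu_{n_k}}(a_k)\ge\eps$. It then forms $Q(b)=\sum_{k\ge k_0}\la b,a_k\ra^2$, which is well defined, continuous at $0$ and negative-definite on $\S'$, so that $U=\exp(-Q)$ is positive-definite and continuous; by the \emph{dual} Minlos--Bochner theorem (Theorem~\ref{thm:MinlosS}) $U$ is the characteristic functional of a probability measure $\nu$ on $\S$. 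Fubini gives $\int_{\S'}U\,d\mu_n=\int_{\S}\cL_{\mu_n}(a)\,d\nu(a)$, and dominated convergence \emph{over the fixed measure $\nu$} converts the pointwise convergence $\cL_{\mu_n}\to\cL$ into $\int_{\S'}U\,d\mu_n\to\int_{\S'}U\,d\mu$; choosing $k_0$ so that $\int(1-U)\,d\mu<\eps/2$ then forces $1-\Re\cL_{\mu_n}(a_k)<M\eps$ for all large $n$ and all $k\ge k_0$, contradicting the failure assumption. The role your missing uniformity would have played is taken over by integration against $\nu$: averaging over a whole measure on $\S$ is what turns ``for each fixed $a$'' into a statement uniform enough to contradict the bad sequence, and this is the idea your proposal lacks.
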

\begin{proof}
 Assume that $(\Re \cL_{\mu_n})_{n\ge 1}$ is not equicontinuous. Then, there exist $\eps>0$ 
and
 a sequence of elements $a_k=(a_{k,n})_{n\in\N^d}\in\S$, $k\in\N$, such that $a_k\to 0$ in $\S$ as $k\to\infty$, and an increasing sequence of integers $(n_k)_{k\ge1}$ such that
\begin{equation}\label{eq:no-equicont}
 1-\Re \cL_{\mu_{n_k}}(a_k)\ge \eps, \quad \text{ for all }k\ge 1.
\end{equation}
Since $a_k\to 0$ in $\S$, considering a subsequence if necessary, we can assume that
$2^{k}a_k\to 0$ in $\S$. That is, for all $p\in\N$, $\lim_{k\to\infty}\|2^ka_k\|_p= 0$.
For a positive integer $k_0$ to be fixed later, we define the functional $Q:\S'\to \R_+$ by
\[
 Q(b)=\sum_{k\ge k_0}\langle b,a_k \rangle^2,\quad b\in\S'.
\]
Note that, if $b\in\S'$, there exists $p\ge k_0$ such that $b\in\S_{-p}$ and thus
\[
 Q(b)\le \sum_{k=k_0}^p\langle b,a_k \rangle^2+\|b\|_{-p}\sum_{k\ge p+1} \|a_k\|_p^2 <\infty
\]
because $\|a_k\|_p^2=o(2^{-2k})$. This proves that $Q$ is well-defined.
Further, $Q$ is continuous at $0$. Indeed, since $2^{k}a_k\to 0$, the set $B=\{2^ka_k\mid k\in\N\}$ is bounded in $\S$, and for all $\eta>0$, the neighborhood $V_\eta=\{b\in\S'\mid q_B(b)< \eta\}$ of $0$ in $\S'$ satisfies that for all $b\in V_\eta$, $|\langle b,a_k\rangle| \le \eta 2^{-k}$, and thus
\[
 |Q(b)-Q(0)|=Q(b)\le \eta^2 \sum_{k\ge k_0} 2^{-k}\le \eta^2.
\]
Next,
$Q$ is negative-definite, i.e.\ for all $m\in\N$, all $c_1,\ldots,c_m\in\C$ such that $\sum_{i=1 }^mc_i=0$, and all $b^1,\ldots,b^m\in \S'$,
\[
\summ i1m \sum_{j=1}^m c_i c_j Q(b^i-b^j)\le 0.
\]
Indeed, it is sufficient to notice that 
\[
\summ i1m\summ j1m c_i c_j Q(b^i-b^j)=-2\left(\sum_{i=1}^m c_i \sum_{k\ge k_0}\langle b^i,a_k \rangle\right)^2.
\]

\medskip

\noindent The functional $U:\S'\to [0,1]$, defined by 
\[
 U(b)=\exp(-Q(b)),\quad b\in\S',
\]
is then positive-definite (see \citep[Chap.~3, Theorem 2.2]{BerChrRes84}), continuous at $0$ and $U(0)=1$. 
Hence, by Theorem~\ref{thm:MinlosS} there exists a Borel probability measure $\nu$ on $\S$ such that $\cL_\nu=U$.

\medskip

\noindent Note that, since it can be written as the limit of the measurable maps $(b,a)\mapsto \sum_{n\in\Gamma_N}a_nb_n$ as $N\to\infty$, the bilinear map $\langle \cdot,\cdot\rangle$ is $\cB( \S')\times\cB(\S)$-measurable.
Using Fubini's theorem, for all $n\in\N$, we can write
\[
\int_{\S'} U(b) d\mu_n(b)
=\int_{\S'}\int_\S e^{i\langle b,a\rangle}d\nu(a)d\mu_n(b)
=\int_\S\cL_{\mu_n}(a)d\nu(a).
\]
But, by dominated convergence,  
\[
\limn \int_\S\cL_{\mu_n}(a)d\nu(a)=\int_\S\cL_\mu(a)d\nu(a).
\]
Again, by Fubini's theorem, $\int_\S\cL_\mu(a)d\nu(a)=\int_{\S'} U(b) d\mu(b)$, and thus,
\[
\limn \int_{\S'} U(b) d\mu_n(b)= \int_{\S'} U(b) d\mu(b).
\]
Now, recalling the definition of $U$, we can fix $k_0$ such that
\[
 \int_{\S'}1- U(b) d\mu(b)<\eps/2,
\]
and find $N\in\N$ such that for all $n\ge N$,
\[
 \int_{\S'}1- U(b) d\mu_n(b)<\eps.
\]
As a consequence, for all $n\ge N$, and $k\ge k_0$,
\[
 \int_{\S'}1- \exp\left(-\langle b,a_k\rangle^2\right) d\mu_n(b)\le \int_{\S'}1-U(b)d\mu_n(b)<\eps.
\]
We infer that for all $n\ge N$, and $k\ge k_0$,
\[
 1-\Re \cL_{\mu_n}(a_k)=\int_{\S'}1- \cos(-\langle b,a_k\rangle) d\mu_n(b)<M\eps,
\]
where $M=\sup_{x\ge 0}\frac{1-\cos(x)}{1-\exp(-x^2)}<\infty$.
This contradicts \eqref{eq:no-equicont}.
From Lemma~\ref{lem:tight}, we infer that $(\mu_n)_{n\ge 1}$ is tight in $\cP(\S')$.
\end{proof}

\section{Appendix}\label{sec:appendix}
The goal of the appendix is to prove  the following proposition. 
\begin{prop}\label{prop:strongly_compact}
In $\S'$, the strongly closed and strongly bounded subsets are strongly compact.
\end{prop}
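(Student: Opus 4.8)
The plan is to reduce the proposition to two facts: every strongly bounded set lies in a closed ball of some Hilbert space $\S_{-p}$, and such a ball becomes compact once it is viewed inside the strictly larger Hilbert space $\S_{-p'}$ with $p'>p$. The strong compactness is then obtained by transporting this compactness through the continuous injection of Lemma~\ref{lem:cont}. Fix a strongly closed and strongly bounded set $A\subset\S'$; I would proceed in three steps.

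\textbf{Step 1 (equiboundedness in one $\S_{-p}$).} I would first show there exist $p\in\N$ and $R>0$ with $A\subset\{b\in\S_{-p}\mid\|b\|_{-p}\le R\}$. Since $\pi_s$ is finer than $\pi_w$, $A$ is weakly bounded, so $N(a):=\sup_{b\in A}|\langle b,a\rangle|$ is finite for every $a\in\S$. Being a supremum of the continuous seminorms $a\mapsto|\langle b,a\rangle|$, the map $N$ is a lower semicontinuous seminorm on the Fréchet space $\S$. The sets $\{N\le k\}$ are closed, convex, balanced and cover $\S$, so by Baire's theorem one of them has nonempty interior; hence $N$ is bounded on a neighborhood of $0$ and therefore continuous, giving $p$ and $\delta>0$ with $N(a)\le\delta^{-1}\|a\|_p$ for all $a\in\S$. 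Testing this against the finitely supported sequences $a^{(m)}$ with $a^{(m)}_n=(1+n)^{-2p}b_n$ for $n\in\Gamma_m$ (and $0$ otherwise), for which $\langle b,a^{(m)}\rangle=\|a^{(m)}\|_p^2$, and letting $m\to\infty$ yields $\|b\|_{-p}\le\delta^{-1}$ for every $b\in A$. Thus $A$ sits in the ball of radius $R=\delta^{-1}$ of $\S_{-p}$.

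\textbf{Step 2 (compactness of the ball in $\S_{-p'}$).} Fix $p'>p$ and set $\bar B=\{b\mid\|b\|_{-p}\le R\}$; I would prove $\bar B$ is compact for the norm topology of $\S_{-p'}$. Given a sequence in $\bar B$, reflexivity of the Hilbert space $\S_{-p}$ provides a weakly convergent subsequence $b^{(k_j)}\rightharpoonup b$, with $\|b\|_{-p}\le R$ by weak lower semicontinuity of the norm and with coordinatewise convergence $b^{(k_j)}_n\to b_n$. Splitting $\|b^{(k_j)}-b\|_{-p'}^2=\sum_n(1+n)^{-2p'}(b^{(k_j)}_n-b_n)^2$ at $\Gamma_M$, the tail is at most $(1+M)^{-2(p'-p)}(2R)^2$, since $(1+n)^{-2(p'-p)}\le(1+M)^{-2(p'-p)}$ off $\Gamma_M$ and $\|b^{(k_j)}-b\|_{-p}\le 2R$, while the finite head tends to $0$ by coordinatewise convergence; choosing $M$ large then $j$ large shows $b^{(k_j)}\to b$ in $\S_{-p'}$. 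Hence $\bar B$ is sequentially compact, and therefore compact, in the metric space $\S_{-p'}$. This is precisely where the summability $\zeta(2(p'-p))^d<\infty$ (the Hilbert--Schmidt nature of the inclusion $\S_{-p}\hookrightarrow\S_{-p'}$) is used.

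\textbf{Step 3 (conclusion).} By Lemma~\ref{lem:cont} the injection $i_{p'}:\S_{-p'}\to(\S',\pi_s)$ is continuous (recall $\S_{p'}'=\S_{-p'}$). Since $A$ is $\pi_s$-closed and $A\subset\S_{-p}\subset\S_{-p'}$, the set $A=i_{p'}^{-1}(A)$ is closed in the norm topology of $\S_{-p'}$; being a closed subset of the $\S_{-p'}$-compact ball $\bar B$, it is compact in $\S_{-p'}$. Applying the continuous map $i_{p'}$ once more shows that $A$ is strongly compact in $\S'$, as required. I expect Step~1 to be the main obstacle: passing from (strong or even weak) boundedness to a genuine uniform bound in a single $\S_{-p}$ is where the special structure of $\S'$ enters, resting on the barrelledness of $\S$ via Baire's theorem; Step~2, by contrast, is a routine diagonal compactness estimate.
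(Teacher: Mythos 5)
Your proof is correct, but it takes a genuinely different route from the paper's. The paper works with the interplay of the weak and strong topologies: it first shows that weakly closed, weakly bounded sets are weakly compact (Tychonoff, Lemma~\ref{lem:weakCompact}), then that strongly bounded sets are equicontinuous (Lemma~\ref{lem:equicont}, proved by a null-sequence contradiction rather than Baire), and finally that the weak and strong topologies coincide on equicontinuous sets (Lemma~\ref{lem:equivTopo}, which is where the relative compactness of bounded sets of $\S$ --- Arzel\`a--Ascoli, Proposition~\ref{compactS} --- enters); combining these gives $H=\bar H$ weakly compact and hence strongly compact. You instead isolate the two Hilbertian mechanisms behind the Montel property of $\S'$: barrelledness of the Fr\'echet space $\S$ (your Baire-category Step~1, which traps $A$ in a ball of a single $\S_{-p}$ --- essentially the same conclusion as Lemma~\ref{lem:equicont}, but reached from the weaker hypothesis of weak boundedness), and compactness of the embedding $\S_{-p}\hookrightarrow\S_{-p'}$ for $p'>p$ (Step~2), after which the paper's own Lemma~\ref{lem:cont} transports compactness into $(\S',\pi_s)$. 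Your approach buys several things: it avoids Arzel\`a--Ascoli and the weak/strong topology-comparison lemma entirely, it is more concrete (every strongly bounded closed set is exhibited as a norm-compact subset of one Hilbert space $\S_{-p'}$), and since Step~1 uses only weak boundedness it proves the stronger fact that weakly bounded subsets of $\S'$ are strongly bounded. The paper's approach, in turn, needs neither Baire category nor weak compactness of balls in Hilbert space, getting its weak compactness for free from Tychonoff. One small inaccuracy worth flagging: your closing remark in Step~2 attributes the tail estimate to the summability $\zeta(2(p'-p))^d<\infty$ (the Hilbert--Schmidt property of the embedding), but the estimate as written uses only $\sup_{n\notin\Gamma_M}(1+n)^{-2(p'-p)}\to 0$ as $M\to\infty$, i.e.\ compactness of the embedding, which is a weaker property; this does not affect the validity of the proof.
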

This proposition tells that $\S'$ is a Montel space. See Remark~\ref{rem:Montel} at the end. We provide a self-contained proof here, which consists of several steps.
\begin{prop}\label{compactS}
In $\cS$ (and hence in $\S$), the bounded subsets are relatively compact.
\end{prop}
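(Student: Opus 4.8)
The plan is to carry out the argument in the multi-sequence model $\S$: by the topological isomorphism $\Phi$ of Proposition~\ref{prop:homeoS}, a subset of $\cS$ is bounded (resp.\ relatively compact) if and only if its image in $\S$ is, so it suffices to treat $\S$. Since the topology of $\S$ is generated by the countable family of norms $\|\cdot\|_p$, $p\in\N$, it is a metrizable (Fr\'echet) space, and there relative compactness is equivalent to relative sequential compactness. Accordingly, I would fix a bounded set $B\subset\S$ --- which by definition means $M_p:=\sup_{a\in B}\|a\|_p<\infty$ for every $p\in\N$ --- take an arbitrary sequence $(a^{(k)})_{k\ge1}$ in $B$, and produce a subsequence converging in $\S$ to an explicitly constructed limit lying in $\overline B$.

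First I would extract the limit coordinatewise. Since $|a^{(k)}_n|\le\|a^{(k)}\|_0\le M_0$, each coordinate remains in a fixed compact interval, so a diagonal argument over the countable index set $\N^d$ yields a subsequence (which I relabel) along which $a^{(k)}_n\to a_n$ for every $n\in\N^d$. To see that $a:=(a_n)_{n\in\N^d}$ belongs to $\S$, I would note that for each $p$ and each finite block $\Gamma_N$,
\[
\sum_{n\in\Gamma_N}(1+n)^{2p}a_n^2=\lim_{k\to\infty}\sum_{n\in\Gamma_N}(1+n)^{2p}(a^{(k)}_n)^2\le M_p^2;
\]
letting $N\to\infty$ gives $\|a\|_p\le M_p<\infty$ for all $p$, hence $a\in\S$.

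The crux is upgrading coordinatewise convergence to convergence in each norm $\|\cdot\|_p$, and for this I would exploit the uniform tail estimate furnished by boundedness in the \emph{next} norm. Writing $(1+n)^{2p}=(1+n)^{2(p+1)}\prod_{i=1}^d(1+n_i)^{-2}$ and observing that $n\notin\Gamma_N$ forces some $n_i>N$, so that $\prod_{i=1}^d(1+n_i)^{-2}\le(N+2)^{-2}$, one obtains
\[
\sum_{n\notin\Gamma_N}(1+n)^{2p}(a^{(k)}_n)^2\le (N+2)^{-2}\,\|a^{(k)}\|_{p+1}^2\le (N+2)^{-2}M_{p+1}^2,
\]
uniformly in $k$, and the same bound holds for $a$ by the previous step. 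Given $\eps>0$, I would choose $N$ making both tails at most $\eps$; the remaining head $\sum_{n\in\Gamma_N}(1+n)^{2p}(a^{(k)}_n-a_n)^2$ is then a finite sum tending to $0$ by coordinatewise convergence, and combining yields $\|a^{(k)}-a\|_p\to0$. As $p$ is arbitrary, $a^{(k)}\to a$ in $\S$, which gives relative sequential compactness and hence, by metrizability, relative compactness.

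The main obstacle is precisely this last step: coordinatewise (equivalently weak) convergence of a bounded sequence does not by itself yield convergence in the Fr\'echet topology. The whole point is that the gain of one order of regularity --- control of $\|\cdot\|_{p+1}$ --- renders the $\|\cdot\|_p$-tails uniformly negligible, because the weight ratio $(1+n)^{-2}$ is uniformly small off large blocks $\Gamma_N$. This is the only place where the specific weighted structure of the spaces $\S_p$ enters; the diagonal extraction and the identification of the limit are routine.
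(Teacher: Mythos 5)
Your proof is correct, but it takes a genuinely different route from the paper's. The paper argues directly in the function model $\cS(\R^d)$: it applies Arzel\`a--Ascoli's theorem to the functions $x\mapsto(1+|x|)^m D^jf_n(x)$ on an increasing exhaustion of $\R^d$ by compact sets, and uses repeated diagonal extractions (over the countably many semi-norms $\|\cdot\|_{m,j}$ and over the compacts) to produce a subsequence converging in every $\|\cdot\|_{m,j}$. You instead transfer the problem to the sequence model $\S$ via Proposition~\ref{prop:homeoS} --- legitimate, since a topological isomorphism preserves both boundedness and relative compactness, and there is no circularity because that proposition does not depend on the appendix --- and the heart of your argument is the uniform tail estimate
\[
\sum_{n\notin\Gamma_N}(1+n)^{2p}\bigl(a^{(k)}_n\bigr)^2\le (N+2)^{-2}\,\|a^{(k)}\|_{p+1}^2,
\]
which is exactly the statement that the embedding $\S_{p+1}\hookrightarrow\S_p$ is compact: one extra order of weight makes the $\|\cdot\|_p$-tails uniformly negligible on bounded sets. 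This isolates the structural mechanism that makes $\S$ a Montel space, uses only elementary weighted-$\ell^2$ manipulations of the same kind the paper employs in Lemma~\ref{lem:bound}, and avoids Arzel\`a--Ascoli altogether; the paper's proof, conversely, stays in the classical Schwartz picture and never invokes the Hermite expansion. Note also that the paper states the result for $\cS$ ``and hence in $\S$,'' while you prove it in $\S$ and deduce it for $\cS$ --- the same equivalence, used in the opposite direction. Two cosmetic remarks: in your final combination the tail of $\|a^{(k)}-a\|_p^2$ is bounded by $4\eps$ rather than $\eps$ (via $(x-y)^2\le 2x^2+2y^2$, or by the triangle inequality applied to the tail block), which is harmless; and the parenthetical ``equivalently weak'' describing coordinatewise convergence is never used and could be dropped.
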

\begin{proof}
As $\cS$ is metrizable, compactness and sequential compactness are equivalent. Let $(f_n)_{n\in\N}$ be a bounded sequence in $\cS$.
Let $(K_\ell)_{\ell\ge 1}$ be an increasing sequence of compact subsets of $\R^d$ whose union is 
 $\R^d$.
Recall that $(\|\cdot\|_{m,j})_{m\in\N, j\in\N^d}$ in~\eqref{eq:norm_mj} and $(\|\cdot\|_p)_{p\in\N}$ in~\eqref{eq:nn_p} generate the same topology in $\cS$.
For 
each $m\in\N$ and $j\in\N^d$, the sequence of functions $x\to (1+|x|)^m D^jf_n(x)$, $n\in\N$, is uniformly bounded and equicontinuous on $K_1$ and hence, by Arz\`ela--Ascoli's theorem, there exists a subsequence that converges uniformly on $K_1$. By diagonal extraction, we can find a subsequence $(f_{\varphi(n)})_{n\in\N}$ for which $(1+|\cdot|)^m D^jf_{\varphi(n)}(\cdot)$ converges uniformly on $K_1$ for all $m\in\N$ and $j\in\N^d$. We can successively repeat the same procedure on each  $K_\ell$, $\ell\ge1$, and finally, another diagonal extraction leads to a subsequence that converges with respect to all the semi-norms $\nn\cdot_{m,j}$, and hence in $\cS$.
\end{proof}

\begin{lem}\label{lem:weakCompact}
 If $H\subset \S'$ is weakly closed and weakly bounded, then it is weakly compact.
\end{lem}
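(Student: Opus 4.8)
The plan is to find a single weakly compact set $K\subseteq\S'$ with $H\subseteq K$; since $H$ is weakly closed, it will then be a weakly closed subset of the weakly compact set $K$, hence weakly compact. The right candidate is a ball $K=\{b\in\S_{-p}:\|b\|_{-p}\le R\}$ for one fixed pair $(p,R)$, so there are two points to establish: \textbf{(a)} weak boundedness of $H$ forces $H$ into such a ball, and \textbf{(b)} every such ball is weakly compact.

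For \textbf{(a)} I would use that $\S$ is a Baire space. Through the identification $\S'=\bigcup_{p}\S_{-p}$ and Lemma~\ref{lem:isom}, each $b\in H$ is a continuous linear functional on $\S$, and weak boundedness of $H$ means exactly that the family $(b)_{b\in H}$ is pointwise bounded on $\S$. The sets $E_k=\{a\in\S:\sup_{b\in H}|\langle b,a\rangle|\le k\}$ are closed (each is an intersection of sets of the form $\{|\langle b,\cdot\rangle|\le k\}$) and cover $\S$; since $\S$ is topologically isomorphic to the Fréchet space $\cS$ (Proposition~\ref{prop:homeoS}) it is a complete metric space, so by Baire's theorem some $E_{k_0}$ has an interior point $a_0$. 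Then $a_0+\{a:\|a\|_p<r\}\subseteq E_{k_0}$ for some $p\in\N$, $r>0$, and the triangle inequality gives $|\langle b,a\rangle|\le 2k_0$ whenever $\|a\|_p<r$ and $b\in H$; equivalently $|\langle b,a\rangle|\le (2k_0/r)\|a\|_p$ for all $a\in\S$. By Lemma~\ref{lem:isom} this says $b\in\S_{-p}$ with $\|b\|_{-p}\le R:=2k_0/r$, uniformly in $b\in H$, so $H\subseteq K:=\{b\in\S_{-p}:\|b\|_{-p}\le R\}$. This uniform-boundedness step is the crux of the argument, as it is the only place where the completeness of $\S$ is used.

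For \textbf{(b)} I would avoid abstract duality and argue by Tychonoff. For $b\in K$ one has $(1+n)^{-2p}b_n^2\le\|b\|_{-p}^2\le R^2$, hence $|b_n|\le R(1+n)^{p}$, so $K$ sits inside the product $P=\prod_{n\in\N^d}[-R(1+n)^{p},R(1+n)^{p}]$, which is compact in the product topology of $\R^{\N^d}$ by Tychonoff's theorem. On $K$ the weak topology $\pi_w$ coincides with the product topology: writing $\delta^{(n)}\in\S$ for the multi-sequence that is $1$ at $n$ and $0$ elsewhere, the coordinate map $b\mapsto b_n=\langle b,\delta^{(n)}\rangle$ is $\pi_w$-continuous, so the product topology is coarser than $\pi_w$; conversely, for a fixed $a\in\S$ the rapid decay of $a$ makes $\sum_{n}|a_n|\,R(1+n)^{p}<\infty$, so the partial sums $\sum_{n\in\Gamma_N}a_nb_n$ converge to $\langle b,a\rangle$ uniformly over $b\in K$, exhibiting $\langle\cdot,a\rangle$ as a uniform limit of product-continuous functions and hence product-continuous on $K$. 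Finally $K$ is closed in $P$, being a sublevel set of the map $b\mapsto\sum_n(1+n)^{-2p}b_n^2$, which is lower semicontinuous for the product topology, so $K$ is product-compact, equivalently $\pi_w$-compact. Combining \textbf{(a)} and \textbf{(b)}, $H$ is a weakly closed subset of the weakly compact $K$, and is therefore weakly compact.
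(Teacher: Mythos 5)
Your proof is correct, and although it shares its Tychonoff core with the paper's argument, it is a genuinely more complete route. The paper's proof is three lines: project $H$ onto each coordinate to get bounded sets $H_n\subset\R$, apply Tychonoff to $\prod_{n\in\N^d}H_n$, and conclude that $H$, being a closed subset of this product, is compact. As written, this elides two points: the product topology of $\R^{\N^d}$ restricted to $\S'$ is \emph{coarser} than $\pi_w$ (it is generated only by the finitely supported elements of $\S$), so a weakly closed set is not obviously product-closed, and conversely product-compactness of $H$ does not by itself yield $\pi_w$-compactness. Your steps \textbf{(a)} and \textbf{(b)} are precisely what bridges this: the Baire/Banach--Steinhaus argument upgrades pointwise boundedness to a uniform bound $H\subset K=\{b\in\S_{-p}:\|b\|_{-p}\le R\}$, i.e.\ uniformly polynomially bounded coordinates, and on such a ball the uniform convergence of the partial sums $\sum_{n\in\Gamma_N}a_nb_n$ shows that $\pi_w$ and the product topology coincide, after which Tychonoff applies cleanly. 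Step \textbf{(a)} is also where completeness of $\S$ (via its identification with the Fr\'echet space $\cS$, Proposition~\ref{prop:homeoS}) enters, and some such ingredient is unavoidable when only weak boundedness is assumed. It is worth noting that the paper effectively obtains this input elsewhere: in its only application of the lemma (the proof of Proposition~\ref{prop:strongly_compact}), the set in question is additionally strongly bounded, and Lemma~\ref{lem:equicont} then provides the same uniform bound by an elementary contradiction argument, with Lemma~\ref{lem:equivTopo} playing the role of your topology-comparison step. So what the paper's version buys is brevity in the context where the lemma is actually used, while your version proves the lemma exactly as stated, with the weaker hypothesis, at the cost of invoking Baire's theorem.
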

\begin{proof}
Let $H\subset \S'$ be weakly closed and weakly bounded. Then, for all $n\in\N^d$, the set 
$ H_n=\{b_n \mid (b_k)_{k\in\N^d}\in H\}$
is closed and bounded in $\R$. By Tychonoff's theorem, the set $\prod_{n\in\N^d}H_n$ is compact in $\R^{\N^d}$. Therefore $H$, being a closed subset of $\prod_{n\in\N^d}H_n$, is compact.
\end{proof}

We say that  a set $H\subset\S'$ is \textit{equicontinuous} if 
for all $\eps>0$, there exists a neighborhood $V$ of $0$ in $\S$ such that, for all $b\in H$ and $a\in V$, $|\la b , a\ra|<\eps$. 

\begin{lem}\label{lem:equivTopo}
 If $H\subset\S'$ is equicontinuous, the weak topology and the strong topology are identical on $H$.
\end{lem}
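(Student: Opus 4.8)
The plan is to prove that the identity map $(H,\pi_w)\to(H,\pi_s)$ is continuous; since the strong topology is always finer than the weak one (each weak semi-norm $|\la\cdot,a\ra|$ is the strong semi-norm $q_B$ with $B=\{a\}$), this will force the two topologies to agree on $H$. Equivalently, I must show that every basic strong neighborhood, intersected with $H$, contains the intersection of $H$ with some weak neighborhood. First I would record that the difference set $H-H$ is again equicontinuous: for $b,b'\in H$ and $a$ in a neighborhood $V$ of $0$ one has $|\la b-b',a\ra|\le|\la b,a\ra|+|\la b',a\ra|$, so equicontinuity of $H$ at level $\eps$ yields equicontinuity of $H-H$ at level $2\eps$.

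Next, I would fix $b_0\in H$, a bounded set $B\subset\S$, and $\eps>0$, and look at the basic strong neighborhood $\{b\in\S'\mid q_B(b-b_0)<\eps\}$. Using equicontinuity of $H-H$, I choose a neighborhood $V$ of $0$ in $\S$, of the form $V=\{a\mid\|a\|_p<\delta\}$, such that $|\la b-b',a\ra|<\eps/3$ for all $b,b'\in H$ and all $a\in V$. The crucial step is then to replace the supremum over the infinite set $B$ by finitely many evaluations: since $B$ is bounded in $\S$, it is relatively compact by Proposition~\ref{compactS}, so the open cover $\{a+V\mid a\in B\}$ of $\overline B$ admits a finite subcover, producing $a_1,\dots,a_N\in B$ with $B\subset\bigcup_{i=1}^N(a_i+V)$.

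With these finitely many test functions in hand, I would take the weak neighborhood $W=\{b\in\S'\mid|\la b-b_0,a_i\ra|<\eps/3,\ i=1,\dots,N\}$ of $b_0$ and verify the inclusion $W\cap H\subset\{b\mid q_B(b-b_0)<\eps\}$. Indeed, for $b\in W\cap H$ and any $a\in B$, I pick an index $i$ with $a-a_i\in V$ and split $\la b-b_0,a\ra=\la b-b_0,a-a_i\ra+\la b-b_0,a_i\ra$; the first term is bounded by $\eps/3$ by equicontinuity of $H-H$ (since $b-b_0\in H-H$ and $a-a_i\in V$), and the second is less than $\eps/3$ by definition of $W$. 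Taking the supremum over $a\in B$ gives $q_B(b-b_0)\le 2\eps/3<\eps$, as required.

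This shows that every relatively strong-open set is relatively weak-open, which together with the trivial reverse inclusion establishes that $\pi_w$ and $\pi_s$ coincide on $H$. The main obstacle is exactly the reduction from the supremum over the infinite bounded set $B$ to a finite set of coordinates; this is where relative compactness of bounded subsets of $\S$ (Proposition~\ref{compactS}) and the equicontinuity hypothesis combine, and neither ingredient can be dropped.
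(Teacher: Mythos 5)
Your proof is correct and follows essentially the same route as the paper: choose $V$ by equicontinuity, cover the relatively compact bounded set $B$ by finitely many translates $a_i+V$ (via Proposition~\ref{compactS}), and control $q_B(b-b_0)$ by the finitely many weak semi-norms $|\la\cdot,a_i\ra|$. The only cosmetic difference is that you package the paper's three-term triangle inequality into two terms by first noting $H-H$ is equicontinuous; the content is identical.
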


\begin{proof}
It suffices to prove that the induced strong topology on $H$ is weaker than the induced weak topology. For this purpose, we show that for all $\eps>0$, $B$ a bounded set in $\S$ and $b_0\in H$, 
 the strongly open set $O :=\{b\in H\mid q_B(b-b_0)<\eps\}$ 
 contains a weak neighborhood of $b_0$. 
By equicontinuity, there exists a neighborhood $V$ of $0$ in $\S$ such that 
$|\la b,a\ra|<\eps/3$
 for all $b\in H$ and $a\in V$.
Since $B$ is bounded, by Proposition~\ref{compactS}, it is relatively compact in $\S$. Thus, we can find $a_1,\ldots, a_n \in B$, such that
\[
 B\subset \bigcup_{i=1}^n(a_i+V). 
\]
The set $O':=\{b\in H\mid |\la b-b_0,a_i\ra|<\eps/3\text{ for all }i=1,\ldots,n\}$ is a weak neighborhood of $b_0$ and we have $O'\subset O$. Indeed, for all $b\in O'$   we can find, for all $a\in B$, $i\in\{1,\dots,n\}$, such that $a- a_i\in V$. Then,
\[
 |\la b-b_0,a\ra|\le |\la b,a-a_i\ra|+|\la b-b_0,a_i\ra|+|\la b_0,a-a_i\ra|< \eps/3+\eps/3+\eps/3.
\]
Thus, $q_B(b-b_0)<\eps$ and hence $b\in O$. 
\end{proof}

\begin{lem}\label{lem:equicont}
 If $H\subset\S'$ is strongly bounded, then it is equicontinuous.
\end{lem}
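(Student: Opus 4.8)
The plan is to recognize this as the uniform boundedness principle in disguise and to prove it by hand through Baire's category theorem, so as to keep the argument self-contained. By the definition given just above the lemma, $H$ is equicontinuous precisely when the family of linear functionals $\{a\mapsto\langle b,a\rangle : b\in H\}$ on $\S$ is equicontinuous at $0$. Each of these functionals is continuous on $\S$: every $b\in\S'$ lies in some $\S_{-p}$, and then $|\langle b,a\rangle|\le\|b\|_{-p}\|a\|_p$. The first thing I would do is extract pointwise boundedness from the hypothesis. Testing the definition of strong boundedness against the bounded singletons $B=\{a\}$, $a\in\S$, for which $q_B(b)=|\langle b,a\rangle|$, gives $\sup_{b\in H}|\langle b,a\rangle|<\infty$ for every $a\in\S$.

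Next, for $m\ge1$ I would set $C_m=\{a\in\S : \sup_{b\in H}|\langle b,a\rangle|\le m\}$. Each $C_m$ is closed, convex and balanced, being the intersection over $b\in H$ of the closed convex balanced sets $\{a : |\langle b,a\rangle|\le m\}$ (here closedness uses the continuity of $a\mapsto\langle b,a\rangle$). The pointwise bound from the previous step gives $\S=\bigcup_{m\ge1}C_m$. Since $\S$ is topologically isomorphic to the Fréchet space $\cS$ by Proposition~\ref{prop:homeoS}, it is a Baire space, so some $C_{m_0}$ has nonempty interior; hence there exist $a_0\in\S$, $p\in\N$ and $r>0$ with $a_0+B_p(r)\subseteq C_{m_0}$.

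Finally I would recenter this estimate at the origin using the balanced-convex structure. For $\|a\|_p<r$ both $a_0\pm a$ lie in $C_{m_0}$, and since $2\langle b,a\rangle=\langle b,a_0+a\rangle-\langle b,a_0-a\rangle$ we get $\sup_{b\in H}|\langle b,a\rangle|\le m_0$ whenever $\|a\|_p<r$; by homogeneity this upgrades to $\sup_{b\in H}|\langle b,a\rangle|\le (m_0/r)\|a\|_p$ for all $a\in\S$, so that given $\eps>0$ the neighborhood $V=B_p\bigl(r\eps/(m_0+1)\bigr)$ of $0$ witnesses equicontinuity. The only genuine content here is the Baire-category step, i.e. the specialization of the uniform boundedness principle to the Fréchet space $\S$; the recentering identity and the scaling are routine. (An alternative, fully constructive route is to argue contrapositively: assuming $H$ is not equicontinuous, one uses the Hilbert scale $(\S_{-p})_p$ to build by a gliding-hump construction a single null sequence in $\S$ along which the pairings with suitable elements of $H$ blow up, directly contradicting strong boundedness. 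I expect this to be the more delicate path, since one must simultaneously control $\|\cdot\|_p$ for every fixed $p$ while keeping the pairings large, whereas the Baire argument sidesteps this entirely.)
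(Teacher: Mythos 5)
Your proof is correct, but it takes a genuinely different route from the paper. You prove the lemma as an instance of Banach--Steinhaus via Baire category: pointwise boundedness (obtained from strong boundedness tested on singletons) plus closedness of the sets $C_m$ in the Fr\'echet space $\S$ forces some $C_{m_0}$ to contain a ball $a_0+B_p(r)$, and the recentering/homogeneity steps then give a uniform bound $\sup_{b\in H}|\la b,a\ra|\le (m_0/r)\|a\|_p$. All steps check out (singletons are indeed bounded, the $C_m$ are closed by continuity of each $a\mapsto\la b,a\ra$, and $\S$ is Baire since it is topologically isomorphic to the Fr\'echet space $\cS$). Note that your argument only ever uses \emph{weak} (pointwise) boundedness of $H$, so you actually prove a stronger statement --- that $\S$ is barrelled --- which is more than the lemma asks. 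The paper instead uses the full strength of strong boundedness and avoids Baire category entirely: it is precisely the ``contrapositive route'' you dismiss as delicate, and it is in fact quite short. If no $p_0$ gives a uniform bound on $\{\|b\|_{p_0}'\,:\,b\in H\}$, one picks $a_p\in B_p(1/p)$ and $b_p\in H$ with $|\la b_p,a_p\ra|>p$; since the norms $\|\cdot\|_q$ increase in $q$, the sequence $(a_p)$ is null, hence $B=\{a_p\,:\,p\in\N\}$ is a single bounded set, and $q_B(b_p)>p$ contradicts strong boundedness. No gliding hump is needed, because one uses a \emph{different} $b_p$ for each $p$ and tests strong boundedness against the whole null sequence at once; equicontinuity then follows immediately from the uniform dual-norm bound $M=\sup_{b\in H}\|b\|_{p_0}'$. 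In short: your approach buys a stronger conclusion (weak boundedness suffices) at the cost of invoking Baire category; the paper's buys brevity and minimal machinery, in line with its self-contained aim.
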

\begin{proof}
Let $H\subset\S'$ be strongly bounded. In the sequel, we denote $B_p(\eps)=\{a\in\S\mid \|a\|_p< \eps\}$ for $p\in\N$ and $\eps>0$.
We first show that the set $\{q_{B_{p_0}(1)}(b)\mid b\in H\}$ is bounded for some $p_0\in\N$.
Assume it is not the case. Then, for each $p\in\N$, there exist $a_p\in B_p(1/p)$ and $b_p\in H$ such that $|\la b_p,a_p\ra|>p$. Since each $a_p$ belongs to $B_p(1/p)$, the sequence $(a_p)_{p\in\N}$ converges to $0$ in $\S$ and thus it is bounded. But then, 
for the bounded set $B=\{a_p\mid p\in\N\}$, we have $q_B(b_p)>p$ for all $p\in\N$ which is a contradiction with the fact that $H$ is strongly bounded.\\
Now for such a $p_0$, letting $M=\sup_{b\in H} q_{B_{p_0}(1)}(b)<\infty$, we have that for all $\eps>0$ and for all $a\in B_{p_0}(\eps/M)$, 
\[
 |\la b,a\ra|\le \|b\|_{p_0}'\|a\|_{p_0}\le M\eps/M=\eps,\;\text{  for all }b\in H.
\]
\end{proof}

\begin{proof}[Proof of Proposition~\ref{prop:strongly_compact}]
Let $H$ be a strongly closed and strongly bounded subset of $\S'$. In particular, $H$ is weakly bounded and, by Lemma~\ref{lem:weakCompact}, its weak closure $\bar H$ is weakly compact.
Now, by Lemma~\ref{lem:equicont}, $H$ is equicontinuous and then $\bar H$ is also equicontinuous.
Hence, strong and weak topology are identical on $\bar H$ by Lemma~\ref{lem:equivTopo}. 
As $H$ is strongly closed, it is also weakly closed and we infer that $\bar H=H$. Hence $H$ is weakly compact and then strongly compact.
\end{proof}

\begin{rem}\label{rem:Montel}
Proposition~\ref{compactS} tells that $\cS$ and $\S$ are actually Montel spaces (i.e.\ a locally convex Hausdorff barreled space in which every closed bounded set is compact). See also
\citep[Example 10.IV and Proposition 34.4]{Tre67} 
and \citep[Section I.6.5]{gelfand68generalized}. Proposition~\ref{prop:strongly_compact} tells that  $\cS'$ and $\S'$ are also Montel spaces. Furthermore, as a consequence of Banach--Steinhauss theorem, any weakly convergent sequence of $\cS'$ (or $\S'$) is also strongly convergent (see \citep[Corollary 1 of Proposition 34.6]{Tre67}
and \citep[Section I.6.4]{gelfand68generalized}).
\end{rem}

\subsection*{Acknowledgements}
YW's research was partially supported by 
the NSA grants H98230-14-1-0318 and H98230-16-1-0322, the ARO grant W911NF-17-1-0006, and Charles Phelps Taft Research Center 
at University of Cincinnati.

\bibliographystyle{apalike}

\def\cprime{$'$} \def\polhk#1{\setbox0=\hbox{#1}{\ooalign{\hidewidth
  \lower1.5ex\hbox{`}\hidewidth\crcr\unhbox0}}}
  \def\polhk#1{\setbox0=\hbox{#1}{\ooalign{\hidewidth
  \lower1.5ex\hbox{`}\hidewidth\crcr\unhbox0}}} \def\cprime{$'$}
  \def\cprime{$'$} \def\cprime{$'$}

\end{document}